\newtheorem{thm}{Theorem}[section]
\newtheorem{lem}[thm]{Lemma}
\newtheorem{prop}[thm]{Proposition}
\theoremstyle{definition}
\newtheorem{defin}[thm]{Definition}
\newtheorem{rem}[thm]{Remark}
\newcommand{\N}{\mathbb{N}}
\newcommand{\R}{\mathbb{R}}
\newcommand{\la}{\lambda}
\newcommand{\rk}{\operatorname{rk}}
\newcommand{\pr}{\operatorname{pr}}
\newcommand{\ind}{\operatorname{ind}}
\newcommand{\diag}{\mathrm{diag}}
\newcommand{\spn}{\mathrm{span}}
\newcommand{\II}{\mathfrak{I}}
\newcommand{\GG}{\mathcal{G}}
\newcommand{\g}{\mathfrak{g}}
\begin{document}

\title[sub-pseudo-Riemannian isometries]{On contact sub-pseudo-Riemannian isometries}

\author[M. Grochowski]{Marek Grochowski}
\address{Faculty of Mathematics and Natural Sciences\\ Cardinal Wyszy\'nski University\\
00-956 Warszawa, Poland}
\email{m.grochowski@uksw.edu.pl}

\author[W. Kry\'nski]{Wojciech Kry\'nski}
\address{Institute of Mathematics, Polish Academy of Sciences, ul.~\'Sniadeckich 8, 00-956 Warszawa, Poland}
\email{w.krynski@impan.pl}

\date{\today}

\begin{abstract}
We study isometries in the contact sub-pseudo-Rie\-man\-nian geometry. In particular we give an upper bound on the dimension of the isometry group of a general sub-pseudo-Riemannian manifold and prove that the maximal dimension is attained for the left invariant structures on the Heisenberg group.
\end{abstract}

\maketitle

\section{Introduction}

\subsection{Results}
Let $M$ be a smooth connected manifold. A \emph{sub-pseudo-Riemannian structure} on $M$ is a couple $(H,g)$ made up of a smooth bracket generating 
distribution $H$ of constant rank and a smooth pseudo-Riemannian metric $g$ on $H$. At each point $q\in M$, $g$ can be represented as a diagonal matrix
\begin{align*}
\diag(-1,\dots,-1,+1,\dots,+1)
\end{align*}
with, say, $l$ minuses. Clearly, by continuity, the number $l$ does not depend on a point $q$. It will be denoted $\ind(g)$ and called \emph{the index of 
the metric $(H,g)$}. 

A triple $(M,H,g)$ is called a \emph{sub-pseudo-Riemannian manifold}. In particular, if $\ind(g)=0$ then $(M,H,g)$ is called a 
\emph{sub-Riemannian  manifold}. This case is best known and there are a lot of papers and books devoted to the sub-Riemannian geometry 
(see \cite{A,ACG,AB,Gr,M} and references therein). If $\ind(g)=1$ then $(M,H,g)$ is called a \emph{sub-Lorentzian manifold} (see \cite{G1,GW,GV,KM}). 
The sub-pseudo-Riemannian structures can be interpret as control systems \cite{A,G0}. In particular the sub-Lorentzian structures give rise to a class of 
control-affine systems (c.f. \cite{G0,G1}).

In the sub-pseudo-Riemannian geometry we can ask the same questions as in the classical pseudo-Riemannian geometry. One of the most fundamental problems 
considered in the pseudo-Riemannian geometry is connected to calculations of the isometry group of a given pseudo-Riemannian manifold. We shall consider 
a generalisation of this problem to the sub-pseudo-Riemannian case.

\begin{defin}
Fix a sub-pseudo-Riemannain manifold $(M,H,g)$. A diffeomorphism $f\colon M\to M$ is called an isometry if 
\begin{enumerate}
\item[(D1)] $f$ preserves the distribution, i.e. $f_*(H)=H$;
\item[(D2)] $f_*\colon H_q \to H_{f(q)}$ is a linear isometry for every $q\in M$, i.e. $g(f_*(v),f_*(w))=g(v,w)$ for all $v,w\in H_q$.
\end{enumerate}
\end{defin}
The set of all isometries is a group (in fact a Lie group as it will become clear soon) and will be denoted $\II(M,H,g)$. 
The component of the identity of this group
is $\II_0(M,H,g)$. Clearly $\dim\II(M,H,g)=\dim\II_0(M,H,g)$. We shall assume that $H$ is a contact distribution meaning that it is locally given by the kernel of a contact 
one-form $\alpha$ satisfying
\begin{equation}\label{eq1}
(d\alpha)^{\wedge n}\wedge\alpha\neq 0,
\end{equation}
where $\dim M=2n+1$. In this case $(M, H, g)$ will be referred to as a \emph{contact sub-pseudo-Riemannian manifold}. Our main result is the following
\begin{thm}\label{thm1}
Let $(M,H,g)$ be a contact sub-pseudo-Riemannian manifold. If $\ind(g)$ is even or $\ind(g)=\frac{1}{2}\rk H$ then
\begin{equation}\label{est1}
\dim \II(M,H,g)\leq \dim M+\left(\frac{1}{2}\rk H\right)^2.
\end{equation}
If $\ind(g)$ is odd and $\ind(g)\neq\frac{1}{2}\rk H$ then 
\begin{equation}\label{est2}
\dim \II(M,H,g)\leq \dim M+\left(\frac{1}{2}\rk H-1\right)^2+1.
\end{equation}
\end{thm}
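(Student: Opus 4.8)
The plan is to reduce the global statement to a linear-algebra estimate at a single point, using rigidity of isometries. First I would show that an isometry is determined by its $1$-jet at one point $q\in M$. This rests on a canonical connection adapted to the contact sub-pseudo-Riemannian structure (a pseudo-Riemannian analogue of the generalised Tanaka--Webster/Tanno connection): conditions (D1)--(D2) force an isometry to preserve $H$, the conformal class of $d\alpha|_H$, and $g$, hence also this canonical connection, and a connection-preserving map is determined by its $1$-jet. Consequently $\II(M,H,g)$ is a Lie group, and the orbit--stabiliser decomposition at $q$ gives $\dim\II(M,H,g)\le\dim M+\dim G_q$, where $G_q$ is the image of the isotropy subgroup under $f\mapsto d f_q$, a closed subgroup of $GL(T_qM)$.

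Next I would identify $G_q$. Any element preserves $H_q$ and restricts to a $g_q$-isometry $A$ of $H_q$; since $f^*\alpha=\lambda\alpha$, at the fixed point $\omega_q(Av,Aw)=\lambda(q)\,\omega_q(v,w)$, where $\omega_q:=d\alpha_q|_{H_q}$ is symplectic by \eqref{eq1}, and the transverse line $T_qM/H_q$ is scaled by the same $\lambda(q)$. Introducing the invertible $g_q$-skew endomorphism $J$ defined by $\omega_q(v,w)=g_q(Jv,w)$, the defining relation becomes $[J,D]=c\,J$, so the Lie algebra of $G_q$ maps onto a subalgebra of
\[
\g_0=\{D\in\mathfrak{o}(g_q):[J,D]\in\R\,J\}=\mathfrak{o}(g_q)\cap\mathfrak{csp}(\omega_q),
\]
the kernel being the off-diagonal component of $d f_q$, which I identify with the first Tanaka prolongation $\g_1$. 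It then remains to bound $\dim\g_0$.

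To estimate $\dim\g_0$ I would decompose $H_q$ into the $g_q$-orthogonal, $J$-invariant indecomposable blocks of $J$, reducing the bound to a sum over blocks. When $J$ is semisimple, the $c=0$ part is exactly the centraliser $\mathfrak{o}(g_q)\cap\mathfrak{sp}(\omega_q)$, and the $c\ne0$ directions add nothing, since $J\in\ker\ad_J\cap\operatorname{Im}\ad_J=0$ inside $\mathfrak{o}(g_q)$. A conjugate pair $\pm i\theta$ yields a pseudo-unitary summand $\mathfrak{u}(p,q)$, of dimension $(p+q)^2$ and even real index $2\min(p,q)$ on that block, while a real pair $\pm\theta$ forces a neutral block of signature $(k,k)$ with centraliser $\mathfrak{gl}(k,\R)$ of dimension $k^2$ and contributed index $k$.

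The heart of the argument, and the step I expect to be the main obstacle, is the optimisation: over \emph{all} pairs $(g_q,\omega_q)$ of fixed index $l=\ind(g)$ on $\R^{2n}$, including non-semisimple $J$ and the conformal contributions, one must show $\dim\g_0\le n^2$ when $l$ is even or $l=n=\tfrac12\rk H$, and $\dim\g_0\le(n-1)^2+1$ when $l$ is odd and $l\ne n$. The extremal configurations are a single conjugate block, giving $\mathfrak{u}(n-\tfrac{l}{2},\tfrac{l}{2})$ of dimension $n^2$ (possible exactly when $l$ is even); a single neutral block giving $\mathfrak{gl}(n,\R)$ of dimension $n^2$ (forcing $l=n$); and, for odd $l\ne n$, one hyperbolic $(1,1)$-block together with a unitary part of complex dimension $n-1$, giving $(n-1)^2+1$ (among splittings $k^2+(n-k)^2$ with $k$ odd, the maximum under $k\ne n$ is attained at $k=1$). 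Finally, since $g$ is preserved \emph{exactly}, $\g_0$ contains no nonzero multiple of the identity of $H_q$, which forces $\g_1=0$; hence $\dim G_q\le\dim\g_0$ and the two estimates follow. The bound \eqref{est1} is sharp: the left-invariant structure on the Heisenberg group has isometry group $\mathbb{H}^n\rtimes U(n)$ of dimension $(2n+1)+n^2$.
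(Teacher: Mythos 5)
Your overall architecture matches the paper's: rigidity at the level of $1$-jets, reduction to a linear-algebra bound on the isotropy at a point, and an optimisation over Kronecker types of the pair $(g_q,\omega_q)$. The paper gets the rigidity by canonically extending $g$ to a pseudo-Riemannian metric $G$ on all of $TM$ (using the Reeb field of the contact form normalised by \eqref{norm1}) and quoting Kobayashi, and then passes to the pointwise bound via Kruglikov's theorem on filtered structures after computing that the first Tanaka prolongation vanishes (Lemma \ref{lem3}); your orbit--stabiliser phrasing is a legitimate and arguably more elementary substitute for the Kruglikov step. One thing the paper's normalisation buys that you forgo: because $\alpha$ is pinned down by \eqref{norm1} up to sign, an orientation-preserving isometry preserves $\omega$ \emph{exactly} and fixes the Reeb direction, so the isotropy lands in $O(g_q)\cap Sp(\omega_q)$ with no conformal factor and no off-diagonal jet component. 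You instead carry the conformal algebra $\mathfrak{csp}(\omega_q)$ and the ``off-diagonal''/$\g_1$ term through the whole argument and must kill them at the end.

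That is where the two genuine gaps sit. First, the deduction ``$\g_0$ contains no nonzero multiple of the identity, which forces $\g_1=0$'' is not valid as stated: absence of the identity (or of the grading element) does not in general imply vanishing of the first prolongation --- already classically, $\mathfrak{sl}(n)$ contains no multiple of the identity yet has nonzero first prolongation. What actually kills $\g_1$ here is that $\g_0\subset\mathfrak{so}(l,2n-l)$: the derivation equation $A(v)w-A(w)v=\omega(v,w)v_A$ first forces $v_A=0$ because each $A(v)$ is $g$-skew and $\omega$ is nondegenerate, and then reduces to the Levi-Civita-type system $A(v)w=A(w)v$, whose only $\mathfrak{so}$-valued solution is zero; this is precisely the paper's Lemma \ref{lem3}. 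Second, the step you yourself flag as the main obstacle --- the bound $\dim\g_0\le s^2+t^2$ uniformly over \emph{all} pencils $(g_q,\omega_q)$, including non-semisimple $J$, degenerate restrictions of $g$ to generalised eigenspaces, and the $c\ne 0$ conformal directions (your $\ker\ad_J\cap\operatorname{Im}\ad_J=0$ argument uses semisimplicity of $J$) --- is not carried out, and it is the real content of the paper's Lemma \ref{lem1}, which rests on Thompson's normal forms for symmetric/skew pencils and properties (P1)--(P3) (in particular, that a purely imaginary eigenvalue with $g|_{H_\la\times H_\la}\neq 0$ forces a two-dimensional definite block, which is exactly what makes your parity bookkeeping and the final maximisation of $k^2+(n-k)^2$ legitimate). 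Your identification of the extremal configurations and the resulting bounds \eqref{est1}--\eqref{est2} agrees with the paper, but without these two steps the proof is incomplete.
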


In Section \ref{sec3}, Proposition \ref{prop4}, we will show that the maximal dimension in \eqref{est1} and \eqref{est2} is attained by a left-invariant structure on the Heisenberg group. More precisely we will show that for any value of $\ind(g)\in\{0,1,\ldots,\rk H\}$ and any $t\leq\min\{\ind(g),\rk H-\ind(g)\}$ such that $\ind(g)-t$ is even there is a left-invariant structure such that 
\begin{equation}\label{est3}
\dim \II(M,H,g)=\dim M+\left(\frac{1}{2}\rk H-t\right)^2+t^2.
\end{equation}
In particular the maximal dimension in Theorem \ref{thm1} is attained for $t=0$ if $\ind(g)$ is even, for $t=\ind(g)$ if $\ind(g)=\frac{1}{2}\rk H$ and for $t=1$ if $\ind(g)$ is odd and not equal $\frac{1}{2}\rk H$.

Let us point out here that invariants for the contact sub-pseudo-Riemannian structures has been recently constructed in \cite{GK} (see also \cite{A} for the sub-Riemannian case). The invariants vanish for the left-invariant structures satisfying \eqref{est3}. 

\subsection{Connections with control systems.}\label{sec1.2}
Suppose that $(\Sigma)$ $\dot{q}=f(q,u)$ is a control system on a manifold $M$. By a symmetry of $(\Sigma)$ we mean a diffeomorphism of $M$ which maps 
the trajectories of $(\Sigma)$ onto trajectories of $(\Sigma)$. It turns out that the described results concerning isometry groups of sub-pseudo-Riemannian 
manifolds can be formulated in terms of symmetries of certain control systems.

Indeed, suppose that $(M,H,g)$ is a sub-pseudo-Riemannian manifold with $\ind(g)=l$ and $\rk H=k$. By a timelike (resp. spacelike) curve on $(M,H,g)$ 
we mean every absolutely continuous curve $\gamma\colon [a,b] \to M$ such that $\dot{\gamma}\in H_{\gamma(t)}$ and moreover 
$g(\dot{\gamma}(t),\dot{\gamma}(t))<0$ (resp. $g(\dot{\gamma}(t),\dot{\gamma}(t))> 0$) for almost every $t\in [a,b]$. Suppose that $X_1,\dots,X_k$ is 
an orthonormal basis for $(H,g)$ defined on an open set $U$ such that $g(X_i,X_i)=-1$ for $i=1,\dots,l$ and $g(X_i,X_i)=1$ for $i=l+1,\dots,k$. 
Timelike (resp. spacelike) curves in $U$ with unit speed parametrization can be represented as solutions to the following control system
\begin{equation}\label{ConSys}
\dot{q}=\sum_{i=1}^{k} u_i X_i(q),
\end{equation}
with the set of control parameters equal to
$$
\mathcal{U}=\left\{(u_1,\dots,u_k)\ |\text{\;} -\sum_{i=1}^{l}u_i^2+\sum_{i=l+1}^{k}u_i^2 = -1 \right\},
$$
or
$$
\mathcal{U}=\left\{(u_1,\dots,u_k)\ |\text{\;} -\sum_{i=1}^{l}u_i^2+\sum_{i=l+1}^{k}u_i^2 = 1 \right\},
$$
respectively, where controls are supposed to be measurable and bounded.  Now, it is easy to show that in both cases the symmetries of \eqref{ConSys} 
coincide with the isometry group $\II(U,H_{|U},g)$. One can also consider the sets of null, or nonspecelike curves defined by similar control systems. 
However in the latter cases the isometry group  $\II(U,H_{|U},g)$ is only a subgroup of all symmetries.

In the sub-Lorentzian setting the future directed nonspacelike curves can be described by a control-affine system. To be precise, by a time orientation of a 
sub-Lorentzian manifold $(M,H,g)$ we understand a timelike vector field $X$ on $M$ (i.e. $X(q)\in H_q$ and $g(X(q),X(q))<0$ for every $q\in M$). 
A nonspacelike curve $\gamma\colon [a,b] \to M$ is said to be future directed if $g(\dot{\gamma}(t),X(\gamma(t)))<0$ a.e. on $[a,b]$ (c.f. \cite{G0, G1}). 
Suppose that $X$ is a fixed time orientation and $U$ is an open set on which there exist 
spacelike vector fields $X_2,\dots,X_k$ such that $X,X_2,\dots,X_k$ form an orthonormal basis for $(H,g)$ over $U$. As it is explained in \cite{G0} every 
nonspacelike future directed curve in $U$ is, up to a reparameterization, a trajectory of the control-affine system
\begin{align}\label{AffConSys}
\dot{q}=X+\sum_{i=2}^{k}u_iX_i(q)\text{,}
\end{align}
where the set of control parameters equals the unit ball in $\mathbb{R}^{k-1}$ centered at zero. Now it is clear that $\II(U,H_{|U},g)$ is a group of 
symmetries of the system (\ref{AffConSys}). We refer to \cite{A,G0,G1} for more information on 
the mentioned control systems, the corresponding reachable sets and optimal solutions to the control problems.

\subsection{The content of the paper.}
The paper is organised as follows. In Section \ref{sec2} we formulate and explain basic facts and assumptions that we use later on. We show that $g$ can 
be extended to a metric on $TM$ in a canonical way and exploit this fact to prove that $\II(M,H,g)$ is a Lie group (Theorem \ref{thm2}). Moreover, we 
introduce a canonical symplectic structure on $H$.

Sections \ref{sec3} and \ref{sec4} are devoted to special classes of sub-pseudo-Rie\-man\-nian metrics. In Section \ref{sec3} we consider 
sub-pseudo-Riemannian structures satisfying an additional compatibility condition. In the Riemannian signature the condition guarantees that $H$ caries an 
almost complex structure. In Section \ref{sec4} we consider so-called regular structures, which include all sub-Riemannian and sub-Lorentzian metrics in 
neighbourhoods of generic points. We estimate from above dimensions of the isometry groups for these spacial classes of structures (Theorems \ref{thm3} 
and \ref{thm4}). Moreover, we construct examples with isometry groups of dimension given by formula \eqref{est3}.

Section \ref{sec5} contains the proof of Theorem \ref{thm1}. The main idea relies on the calculation of the Tanaka prolongations of certain graded Lie 
algebras and on the results of Kruglikov \cite{Kr,Kr2} that extend Tanaka's theory to the case of non-constant symbol algebras.

%Section \ref{sec5} gives an example of how to apply cognizance of isometry goups to developping the theory of curves in sub-pseudo-Riemannian manifolds.

\section{Contact sub-pseudo-Riemannian structures}\label{sec2}

\subsection{Extended metric}
Let $(M,H,g)$ be a contact sub-pseudo-Rie\-man\-nian manifold of dimension $2n+1$. Fix $q\in M$ and assume that $H=\ker\alpha$ in a neighbourhood of $q$, where $\alpha$ satisfies \eqref{eq1}. The contact form $\alpha$ defines the Reeb vector field $X_\alpha$ by the conditions
\begin{equation}\label{reeb}
X_\alpha\in\ker d\alpha,\qquad \alpha(X_\alpha)=1.
\end{equation}
It follows that $X_\alpha$ is transverse to $H$. Clearly $X_\alpha$ depends essentially on the choice of $\alpha$ and the one-form is not unique. However it can be normalised in the following way. Let $(X_1,\ldots,X_{2n})$ be an orthonormal frame of $H$ in a neighbourhood of $q$. Then, multiplying $\alpha$ by a smooth function, we can impose the condition
\begin{equation}\label{norm1}
|(d\alpha)^{\wedge n}(X_1,\ldots,X_{2n})|=1,
\end{equation}
which does not depend on the choice of an orthonormal frame. As a result, we get a canonical contact form $\alpha$ given up to a multiplication by $\pm 1$ in the neighbourhood of $q\in M$. We shall see later that for oriented structures one can rid off this ambiguity and get a unique canonical global contact form $\alpha$ on $M$. However, we do not need the uniqueness at this point and using the two normalised contact forms in a neighbourhood of any point $q\in M$ we are able to extend $g$ from $H$ to a metric $G$ on $TM$. Indeed, we set
$$
G|_{H\times H}=g
$$
and
$$
G(X_\alpha,X_\alpha)=1,\qquad G(X_\alpha, H)=0,
$$
where $\alpha$ is a contact form satisfying \eqref{norm1} and $X_\alpha$ is the Reeb vector field corresponding to $\alpha$. Since $\alpha$ is given up 
to a sign, we conclude that $X_\alpha$ is given up to a sign too. However, $G$ does not depend on the sign and we obtain unique $G$ in a neighbourhood of 
each point $q\in M$. The uniqueness implies that $G$ must coincide on overlaps of neighbourhoods of different points. Thus, we get a globally defined 
metric $G$ on $M$ which is canonically determined by the structure $(H,g)$. 
Since any isometry preserves the form $\alpha$ up to a sign, we have proved the following
\begin{prop}\label{prop1}
If $f\colon M\to M$ is an isometry of a sub-pseudo-Riemannian structure $(H,g)$ then $f^*G=G$. Thus $f$ is an isometry of $G$, too.
\end{prop}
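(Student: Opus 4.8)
The plan is to deduce $f^{*}G=G$ from the single fact that an isometry carries the canonical contact form to itself up to a sign, and then to propagate this through the definition of $G$. First I would fix $q\in M$, let $\alpha$ be a canonical form satisfying \eqref{norm1} on a neighbourhood of $q$ and $\alpha'$ one on a neighbourhood of $f(q)$, and record that by (D1) the pullback $f^{*}\alpha'$ annihilates exactly $H$ near $q$; hence $f^{*}\alpha'=\phi\,\alpha$ for some nowhere-vanishing smooth real function $\phi$. The entire proposition then rests on showing $\phi=\pm1$.

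To establish $|\phi|=1$ I would use the normalization \eqref{norm1} and the isometry condition together. Pick an orthonormal frame $(X_1,\dots,X_{2n})$ of $H$ near $q$; by (D1) and (D2) the pushed-forward frame $(f_{*}X_1,\dots,f_{*}X_{2n})$ is orthonormal for $(H,g)$ near $f(q)$, so \eqref{norm1} for $\alpha'$ gives $|(d\alpha')^{\wedge n}(f_{*}X_1,\dots,f_{*}X_{2n})|=1$. Rewriting the left side by naturality of pullback, $(d\alpha')^{\wedge n}(f_{*}X_1,\dots,f_{*}X_{2n})=(d(f^{*}\alpha'))^{\wedge n}(X_1,\dots,X_{2n})$, and since $f^{*}\alpha'=\phi\,\alpha$ with $\alpha|_H=0$, the restriction of its exterior derivative to $H$ equals $\phi\,d\alpha|_H$, whence this expression equals $\phi^{n}(d\alpha)^{\wedge n}(X_1,\dots,X_{2n})$. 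Applying \eqref{norm1} for $\alpha$ to the last factor yields $|\phi|^{n}=1$, so $|\phi|=1$; as $\phi$ is continuous and real-valued it is locally constant, hence equals $+1$ or $-1$ on the connected neighbourhood.

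Next I would track the Reeb field. Because the Reeb field is natural under diffeomorphisms (from the identity $\iota_{f^{*}Y}f^{*}\omega=f^{*}\iota_{Y}\omega$ together with $\alpha'(X_{\alpha'})=1$ and \eqref{reeb}), the Reeb field of $f^{*}\alpha'$ is the $f$-pullback of $X_{\alpha'}$; comparing this with the Reeb field of $\phi\,\alpha=\pm\alpha$, which is $\pm X_{\alpha}$, yields $f_{*}X_{\alpha}=\pm X_{\alpha'}$ with the same sign as $\phi$.

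Finally I would verify $f^{*}G=G$ pointwise at $q$ using the splitting $T_qM=H_q\oplus\R X_{\alpha}$. On $H_q\times H_q$ the identity $G(f_{*}v,f_{*}w)=g(f_{*}v,f_{*}w)=g(v,w)$ is exactly (D2); the mixed terms $G(f_{*}v,f_{*}X_{\alpha})=\pm G(f_{*}v,X_{\alpha'})$ vanish since $f_{*}v\in H$ and $G(X_{\alpha'},H)=0$; and $G(f_{*}X_{\alpha},f_{*}X_{\alpha})=(\pm1)^{2}G(X_{\alpha'},X_{\alpha'})=1=G(X_{\alpha},X_{\alpha})$, the sign disappearing because $X_{\alpha}$ enters $G$ quadratically. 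Since $q$ was arbitrary and $G$ is globally defined, this gives $f^{*}G=G$. The one genuinely delicate step is the normalization computation yielding $|\phi|^{n}=1$: this is where the isometry hypothesis, rather than a mere contactomorphism, is essential, and it is what forces $\phi=\pm1$; everything afterward is bookkeeping around that sign.
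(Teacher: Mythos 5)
Your proof is correct and follows the same route the paper takes: the paper's entire argument is the one-line observation that an isometry preserves the normalized contact form up to sign (hence the Reeb field up to sign, hence $G$), and your write-up simply supplies the details of that observation — the computation $|\phi|^n=1$ from \eqref{norm1}, the naturality of the Reeb field, and the block-by-block verification of $f^*G=G$.
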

We shall denote by $\II(M,G)$ the group of isometries of $(M,G)$.
%, and by $\II_0(M,G)$ its component of identity.
We refer to \cite{GK} for more detailed discussion on the possible extensions of $g$.

Let $O_G(M)$ be the orthonormal frame bundle for $G$. We define $O_{H,g}(M)$, the \emph{orthonormal frame bundle} of $(H,g)$, as a sub-bundle of $O_G(M)$ consisting of points $(q; v_1,\ldots, v_{2n}, v_0)$ such that $(v_1,\ldots,v_{2n})$ is an orthonormal basis of $H_q$. In particular, it follows that $v_0=X_\alpha(q)$ where $\alpha$ is one of the two contact forms normalised by \eqref{norm1} in a neighbourhood of $q$. Now, any pseudo-Riemannian isometry $f\in\II(M,G)$ is uniquely determined by the values of $f(q)$ and $f_*(q)$ where $q$ is an arbitrary fixed point in $M$ \cite{Ko}. 
Since $\II(M,H,g)$ is a closed subgroup of $\II(M,G)$ we get  

\begin{thm}\label{thm2}
$\II(M,H,g)$ is a Lie group with respect to the open-compact topology. Moreover any contact sub-pseudo-Riemannian isometry $f\in \II(M,H,g)$ is uniquely 
determined by two values: $f(q)$ and $f_*(q)$, where $q\in M$ is an arbitrarily fixed point. Additionally, fixing an arbitrary point 
$(q; v_1,\ldots, v_{2n}, v_0)\in O_{H,g}(M)$, the mapping
\begin{equation}\label{embed1}
f\longmapsto(f(q); f_*(v_1),\ldots,f_*(v_{2n}),f_*(v_0))
\end{equation}
defines an embedding of $\II(M,H,g)$ to $O_{H,g}(M)$.
\end{thm}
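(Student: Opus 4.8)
The plan is to leverage the classical theory of pseudo-Riemannian isometries, since Proposition~\ref{prop1} has already reduced the problem by showing that every sub-pseudo-Riemannian isometry $f\in\II(M,H,g)$ is simultaneously an isometry of the extended metric $G$ on $TM$. The key ingredient I would invoke is the cited theorem of Kobayashi \cite{Ko}: an isometry of a pseudo-Riemannian manifold is uniquely determined by its value $f(q)$ at one point together with its differential $f_*(q)$ at that point. Granting this, the uniqueness assertion of the theorem is immediate, because if two sub-pseudo-Riemannian isometries $f_1,f_2$ agree to first order at $q$, then viewed as $G$-isometries they must coincide globally.

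\smallskip
First I would make precise that $\II(M,H,g)$ sits inside $\II(M,G)$ as a subgroup, and argue that it is a \emph{closed} subgroup in the compact-open topology. The containment is exactly Proposition~\ref{prop1}. For closedness, I would take a sequence (or net) $f_j\in\II(M,H,g)$ converging to some $f\in\II(M,G)$ and verify that the limit still satisfies the two defining conditions (D1) and (D2). Condition (D1), preservation of $H=\ker\alpha$, and condition (D2), the linear-isometry property of $f_*$ on $H$, are both closed conditions: they are expressed by continuous equations in $f$ and its first derivatives, and on isometries of $G$ the convergence $f_j\to f$ in the compact-open topology entails convergence of the differentials as well (the Myers--Steenrod type regularity for pseudo-Riemannian isometries). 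Hence the limiting $f$ preserves $H$ and restricts to a $g$-isometry on $H$, so $f\in\II(M,H,g)$.

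\smallskip
Once closedness is established, the Lie group structure follows from the classical fact that a closed subgroup of a Lie group is again a Lie group (and that $\II(M,G)$ is itself a Lie group by Myers--Steenrod in the pseudo-Riemannian setting). This gives the first sentence of the theorem. The uniqueness statement then restates Kobayashi's theorem transported through the inclusion $\II(M,H,g)\hookrightarrow\II(M,G)$: since $f\in\II(M,H,g)$ is in particular a $G$-isometry, it is determined by $f(q)$ and $f_*(q)$.

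\smallskip
For the final embedding claim, I would fix a reference frame $(q;v_1,\ldots,v_{2n},v_0)\in O_{H,g}(M)$ and consider the evaluation map \eqref{embed1}. That its image lands in $O_{H,g}(M)$ is exactly the content of the definitions: $f_*$ maps the orthonormal $H$-frame $(v_1,\ldots,v_{2n})$ to another orthonormal $H$-frame (by (D1) and (D2)), and it carries the Reeb direction $v_0=X_\alpha(q)$ to the corresponding Reeb vector $f_*(v_0)=X_{\alpha}(f(q))$ at $f(q)$ (since $f$ preserves $\alpha$ up to sign and the normalisation \eqref{norm1}, hence preserves the $G$-unit-normal frame, which is the defining data of $O_{H,g}(M)$). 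Injectivity of the map is precisely the uniqueness just proved, and its smoothness with respect to the Lie group structure is standard for the action of an isometry group on its frame bundle. The main obstacle, and the point requiring the most care, is the closedness argument in the second paragraph: one must be sure that convergence in the compact-open topology of $G$-isometries is strong enough to preserve the first-order conditions (D1)--(D2), which relies on the continuity of the passage $f\mapsto f_*$ for pseudo-Riemannian isometries and is where the pseudo-Riemannian regularity theory is genuinely used.
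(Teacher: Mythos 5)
Your proposal is correct and follows essentially the same route as the paper: the paper's proof is exactly the reduction to Kobayashi's theorem for the extended metric $G$ via Proposition \ref{prop1}, together with the observation that $\II(M,H,g)$ is a closed subgroup of the Lie group $\II(M,G)$. You merely supply more detail (the compact-open closedness argument and the frame-bundle embedding) than the paper's one-line proof does.
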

\begin{proof}
Follows from the fact that $\II(M,G)$ is a Lie group \cite{Ko} and its subgroup $\II(M,H,g)$ is closed in $\II(M,G)$.
\end{proof}

\subsection{Orientation}
Let $(M,H,g)$ be a contact sub-pseudo-Rie\-man\-nian manifold of dimension $2n+1$. We shall say that the structure is \emph{oriented} if the two vector 
bundles $TM$ and $H$ are oriented (see \cite{G1} for various notions of orientations related to the casual decomposition of the distribution under 
consideration). 
We shall see that the structure is oriented if and only if there is a global contact form annihilating $H$. There are two cases depending on the parity of 
$n$.

If $n$ is even then $(d\alpha)^{\wedge n}$ is independent of the sign of $\alpha$. Conversely, the sign of $d\alpha^{\wedge n}\wedge\alpha$ changes if 
the sign of $\alpha$ changes. Thus, on the one hand, $H$ is canonically oriented, because fixing an open cover $\{U_s\}_{s\in\Sigma}$ of $M$ and local 
contact forms $\{\alpha_s\}_{s\in\Sigma}$ annihilating $H$ on $U_s$ we can rescale the forms such that $(d\alpha_s)^{\wedge n}$ glue to a global $2n$-form 
non-degenerate on $H$. On the other hand, $M$ is oriented if and only if there is a global contact form annihilating $H$. Indeed, if $\alpha$ is a global 
contact form then $d\alpha^{\wedge n}\wedge\alpha$ defines an orientation of $M$. Conversely, if an orientation of $M$ is given then we can rescale local 
contact forms $\{\alpha_s\}_{s\in\Sigma}$ annihilating $H$ such that $d\alpha_s^{\wedge n}\wedge\alpha_s$ agree with the orientation. Clearly, such 
rescaled one-forms must coincide on the intersections of domains $U_s$. Thus, they define a global one-form on $M$.

If $n$ is odd then $(d\alpha)^{\wedge n}\wedge\alpha$ is independent of the sign of $\alpha$. Conversely, the sign of $d\alpha^{\wedge n}$ changes if the 
sign of $\alpha$ changes. Thus, similarly to the case of even $n$, we deduce that on the one hand $M$ is canonically oriented, and, on the other hand, $H$ 
is oriented if and only if there is global contact form annihilating $H$.

Suppose that $(M,H,g)$ is oriented. In view of the discussion above we can assume that the orientation of $M$ is given by  $d\alpha^{\wedge n}\wedge\alpha$ and the orientation of $H$ is given by $d\alpha^{\wedge n}$, where $\alpha$ is a global contact form. Then $\alpha$ is given up to a multiplication by a positive function. However, we can choose the unique one which satisfies the normalisation condition \eqref{norm1}. We shall call this form the \emph{canonical contact form} of an oriented contact structure. The canonical contact form satisfies 
\begin{equation}\label{norm2}
(d\alpha)^{\wedge n}(X_1,\ldots,X_{2n})=1.
\end{equation}
where  $(X_1,\ldots,X_{2n})$ is an arbitrary positively oriented orthonormal frame of $H$.

If $(M,H,g)$ is oriented then we shall consider isometries preserving the orientation.

\subsection{Symplectic structure}
Assume that $(M,H,g)$ is an oriented sub-pseudo-Riemannian manifold and let $\alpha$ be the canonical contact form. We introduce
$$
\omega=-d\alpha|_{H}.
$$
Then $\omega$ is a symplectic structure on $H$ canonically defined by $\alpha$.

\begin{prop}\label{prop2}
If $f\colon M\to M$ is an isometry of an oriented sub-pseudo-Riemannian structure then $f^{*}\omega=\omega$.
\end{prop}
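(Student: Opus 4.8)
The plan is to promote the already-known ``up to a sign'' behaviour of the contact form into the exact equality $f^*\alpha=\alpha$, and then simply differentiate. First I would recall from the discussion preceding Proposition~\ref{prop1} that any isometry preserves $\alpha$ up to a sign: since $f$ preserves $H=\ker\alpha$ and is a $G$-isometry by Proposition~\ref{prop1}, the differential $f_*$ carries the $G$-orthogonal complement of $H$ (spanned by the Reeb field $X_\alpha$) to itself, and connectedness of $M$ yields a globally constant $\epsilon\in\{-1,+1\}$ with $f^*\alpha=\epsilon\alpha$.

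Next I would fix this sign using orientation-preservation, splitting into the two parities of $n$ exactly as in the orientation discussion above. From $f^*\alpha=\epsilon\alpha$ one obtains $f^*(d\alpha)^{\wedge n}=\epsilon^n(d\alpha)^{\wedge n}$ and $f^*\big((d\alpha)^{\wedge n}\wedge\alpha\big)=\epsilon^{n+1}(d\alpha)^{\wedge n}\wedge\alpha$. If $n$ is even, the orientation of $M$ is the one induced by $(d\alpha)^{\wedge n}\wedge\alpha$, so preservation of this orientation forces $\epsilon^{n+1}>0$; since $n+1$ is odd this gives $\epsilon=+1$. If $n$ is odd, the orientation of $H$ is induced by $(d\alpha)^{\wedge n}$, so preservation forces $\epsilon^{n}>0$, and since $n$ is odd this again gives $\epsilon=+1$. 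In either case $f^*\alpha=\alpha$. (Note that in each parity I must use the sign-\emph{sensitive} invariant: for even $n$ it is the volume form of $M$, for odd $n$ the volume form of $H$, the other one being sign-independent and hence uninformative.)

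Finally, differentiating yields $f^*d\alpha=d(f^*\alpha)=d\alpha$, and since $f$ preserves $H$, for all $v,w\in H_q$ one has $(f^*\omega)_q(v,w)=-(d\alpha)_{f(q)}(f_*v,f_*w)=-(f^*d\alpha)_q(v,w)=-(d\alpha)_q(v,w)=\omega_q(v,w)$, which is the claim. The main obstacle is the middle step: one must keep careful track of which orientation (of $M$ or of $H$) pins down the sign of $\alpha$ in each parity, and verify that orientation-preservation really translates into the correct sign condition on $\epsilon$. Once $\epsilon=+1$ is secured, the remaining computation is immediate.
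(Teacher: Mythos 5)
Your proof is correct and takes essentially the route the paper intends: Proposition~\ref{prop2} is stated without proof as an immediate consequence of the fact that an orientation-preserving isometry fixes the canonical contact form exactly (the up-to-sign invariance of the normalised $\alpha$ plus the orientation conventions pin down $\epsilon=+1$), whence $f^*d\alpha=d\alpha$ and $f^*\omega=\omega$. Your parity-by-parity sign analysis is precisely the content of the paper's orientation discussion, correctly applied.
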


The pair $(g,\omega)$ defines the operator $J\colon H\to H$ by the formula 
\begin{equation}\label{defJ}
\omega_q(v,w)=g(J_q(v),w), \qquad q\in M, \quad v,w\in H_q.
\end{equation}
The eigenvalues of $J$ are basic invariants of the structure $(H,g)$ at each point $q\in M$. We shall analyse the structure of eigenspaces of $J$ using the Kronecker theorem that gives normal forms of pencils of matrices. Precisely, we apply the Kronecker theorem to the pair $(g,\omega)$, i.e. to a pair of a symmetric and a skew-symmetric bi-linear forms. For a detailed analysis of this particular case of the Kronecker theorem we refer to \cite{T}. We shall use later the following properties of eigenvalues of $J$:
\begin{enumerate}
\item[(P1)] if $\la$ is an eigenvalue of $J$ then also $-\la$ is;
\item[(P2)] if $\la$ has non-zero real part then $g$ restricted to the corresponding eigenspace $H_\la$ is degenerate: $g|_{H_\la\times H_\la}=0$;
\item[(P3)] if $\la$ is purely imaginary and $g|_{H_\la\times H_\la}\neq 0$ then $\dim H_\la=2$ and $g$ on $H_\la$ is definite; in this case $b=|{\la}|$ 
is called a frequency (c.f. \cite{A}).
\end{enumerate}
Thus, at each $q\in M$ the distribution $H$ decomposes as follows
\begin{equation}\label{decomp}
H=\hat H\oplus \tilde H
\end{equation}
where
\begin{equation}\label{decomp2}
\hat H=H_{\lambda_1}\oplus\cdots\oplus H_{\lambda_s}.
\end{equation}
and all $H_{\lambda_i}$, $i=1,\ldots, s$ are two dimensional and correspond to purely imaginary eigenvalues (some may repeat). Additionally $\tilde H$ is of dimension $2n-2s$ and $\ind g|_{\tilde H\times \tilde H}=n-s$. Moreover $\tilde H$ decomposes further to eigenspaces which are null with respect to $g$ and appear in pairs $H_\la\oplus H_{-\la}$.

According to \eqref{decomp} and \eqref{decomp2} $J$ has the following form
\begin{equation}\label{matrix1}
J=\left(
\begin{matrix}
      0 & -b_1 & \cdots & 0 & 0 & 0 \\
      b_1 & 0 & \cdots & 0 & 0  & 0 \\
      \vdots & \vdots & \ddots & \vdots& \vdots & \vdots\\
      0 & 0 & \cdots & 0 & -b_s &0 \\
      0 & 0 & \cdots & b_s & 0 & 0\\
      0 & 0 & \cdots & 0 & 0 & \tilde J
\end{matrix}\right)
\end{equation}
where $(b_1,\ldots,b_s)$ are frequencies and $\tilde J$ is a matrix of dimension $(2n-2s)\times (2n-2s)$.

Note that if the signature of $g$ is Riemannian then there is no term $\tilde H$ in the decomposition and we have $n$ frequencies that satisfy (c.f. \cite{A})
$$
\prod_{i=1}^nb_i=1,
$$
due to \eqref{norm2}. On the other hand if index of $g$ is odd then $\tilde H$ always appears. In a very particular case it may occur that all eigenvalues 
of $\tilde J$ are real and the corresponding eigenspaces are one dimensional. Then
\begin{equation}\label{matrix2}
\tilde J=\left(
\begin{matrix}
      0 & c_1 & \cdots & 0 & 0 \\
      c_1 & 0 & \cdots & 0 & 0 \\
      \vdots & \vdots & \ddots & \vdots& \vdots\\
      0 & 0 & \cdots & 0 & c_t \\
      0 & 0 & \cdots & c_t & 0
\end{matrix}\right)
\end{equation}
for some $(c_1,\ldots,c_t)$ where $t=n-s$ and
$$
\left(\prod_{i=1}^sb_i\right)\left(\prod_{i=1}^tc_i\right)=1,
$$
due to \eqref{norm2} again. In particular if $(H,g)$ is a sub-Lorentzian structure in dimension 3 then the three properties (P1)-(P3) of $J$ and the normalisation condition \eqref{norm2} imply that $J=\tilde J$ and the two null directions in $H$ are eigenspaces with real eigenvalues $\pm 1$. Therefore, we can choose an orthonormal frame such that $J=\tilde J$ and
$$
\tilde J=\left(
\begin{matrix}
      0 & 1 \\
      1 & 0 
\end{matrix}\right).
$$

\subsection{Reduction}
Let $(H,g)$ be an oriented sub-pseudo-Riemannian contact structure on $M$. Then the symplectic structure $\omega$ reduces $O_{H,g}(M)$ to the sub-bundle $O_{H,g,\omega}(M)$ of frames which put $(g,\omega)$ into the canonical Kronecker form. Thus, $O_{H,g,\omega}(M)$ consists of points $(q; v_1,\ldots, v_{2m}, v_0)\in O_{H,g}$ such that $(v_1,\ldots,v_{2m})$ is a positively oriented, orthonormal basis of $H_q$ and $J_q$ in this basis is given by \eqref{matrix1}. Moreover, we assume that $v_0=X_\alpha(q)$, where $X_\alpha$ is the Reeb vector field corresponding to the canonical contact form. Then the following group acts freely and transitively on $O_{H,g}(M)_q$ 
\begin{eqnarray}\label{group}
\GG_{g,\omega}(q)=\left\{\left(
\begin{matrix}
      A & 0 \\
      0 & 1
\end{matrix}\right) \ |\  A\in O(g_q)\cap Sp(\omega_q) \right\},
\end{eqnarray}
where $O(g_q)$ is the subgroup of $GL(H_q)$ preserving $g_q$ and $Sp(\omega_q)$ is the subgroup of $GL(H_q)$ preserving $\omega_q$. Of course $O(g_q)\simeq O(l,2n-l)$, where $l=\ind(g)$ and $O(l,2n-l)$ is the standard group of matrices preserving a metric of index $l$ and $Sp(\omega_q)\simeq Sp(2n)$, where $Sp(2n)$ is the group of matrices preserving the standard symplectic form given by
\begin{equation}\label{standard_sympl}
\Omega=\left(
\begin{matrix}
      0 & -I_n \\
      I_n & 0 
\end{matrix}\right),
\end{equation}
where $I_n$ is the $n\times n$ identity matrix. Note that automatically $O(g_q)\cap Sp(\omega_q)\subset SO(g_q)$, because the orientation is defined in terms of $\omega_q$.

The intersection $O(g_q)\cap Sp(\omega_q)$ essentially depends on $g$ and $\omega$ at a given point and the groups $\GG_{g,\omega}(q)$ may be not isomorphic for different $q$. Actually, we shall show later that the dimension of $\GG_{g,\omega}(q)$ depends on the decomposition of $J_q$ into the sum of eigenspaces.

\section{Compatibility condition}\label{sec3}

\subsection{Isometries of compatible structures}
We will consider a particular class of oriented contact sub-pseudo-Riemannian structures such that $g$ and $\omega$ are compatible. One expects that the most symmetric structures are among this class.
\begin{defin}
Let $(M,H,g)$ be an oriented sub-pseudo-Riemannian manifold and let $\omega$ be the corresponding symplectic structure on $H$. Then $g$ and $\omega$ are \emph{compatible} if in a neighbourhood of any $q\in M$ there is a frame which is mutually orthonormal with respect to $g$ and symplectic with respect to $\omega$. The sub-pseudo-Riemannian structure satisfies the \emph{compatibility condition} if $g$ and $\omega$ are compatible.
\end{defin}
Note that in the case of compatible structures with $g$ being Riemannian, $J$ is an almost complex structure on $H$. Similarly, in the case of compatible 
structures with $\ind(g)=\frac{1}{2}\rk H$, $J$ is a para-CR structure, provided that there are no purely imaginary eigenvalues of $J$. In general, 
the compatibility condition can be expressed in terms of frequencies.
\begin{prop}\label{prop3}
An oriented sub-pseudo-Riemannian structure satisfies the compatibility condition if and only if the frequencies in \eqref{matrix1} satisfy $b_i=1$, $i=1,\ldots,s$ and $\tilde J$ is of the form \eqref{matrix2} with $c_i=1$, $i=1,\ldots,t$.
\end{prop}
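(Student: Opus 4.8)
The plan is to establish the equivalence by analyzing what the compatibility condition says about the canonical Kronecker normal form of the pair $(g,\omega)$ at each point. Recall that $J$ is defined by $\omega_q(v,w)=g(J_q(v),w)$, so in a frame that is simultaneously orthonormal for $g$ and symplectic for $\omega$, the matrix of $J$ is determined by the matrix of $\omega$ (i.e.\ the standard symplectic form $\Omega$ from \eqref{standard_sympl}) conjugated against the diagonal sign matrix of $g$. The compatibility condition asserts exactly the existence of such a frame in a neighbourhood of every point, so the whole problem reduces to a pointwise linear-algebra computation: decide precisely which pencils $(g,\omega)$ admit a mutually orthonormal-symplectic basis.

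First I would treat the \emph{sufficiency} direction. Assuming $b_i=1$ for all $i$ and $\tilde J$ is of the form \eqref{matrix2} with $c_i=1$, I would exhibit, block by block using the decompositions \eqref{decomp}--\eqref{decomp2}, an explicit basis that is orthonormal for $g$ and symplectic for $\omega$. On each two-dimensional definite eigenspace $H_{\lambda_i}$ corresponding to a frequency $b_i=1$, the normalisation $\omega_q(v,w)=g(J_q v,w)$ with $J$ given by the $2\times 2$ block $\left(\begin{smallmatrix}0&-1\\1&0\end{smallmatrix}\right)$ forces $\omega$ to agree with the standard symplectic block on an orthonormal pair, so such a pair is automatically symplectic. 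On the null part $\tilde H$, using that the real eigenspaces pair up as $H_\lambda\oplus H_{-\lambda}$ with $c_i=1$, I would combine the null eigenvectors into hyperbolic $g$-orthonormal pairs and check directly that $J=\left(\begin{smallmatrix}0&1\\1&0\end{smallmatrix}\right)$ makes each such pair symplectic. Assembling these local blocks gives the desired frame, proving one implication.

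For the \emph{necessity} direction, I would argue contrapositively and work in a single frame that is simultaneously orthonormal for $g$ and symplectic for $\omega$ (which exists by hypothesis). In such a frame the matrix of $g$ is the diagonal sign matrix $\varepsilon=\diag(-1,\dots,-1,+1,\dots,+1)$ and the matrix of $\omega$ is the standard $\Omega$, so the matrix of $J=\varepsilon^{-1}\Omega$ is pinned down completely. Its characteristic data—the frequencies and the entries of $\tilde J$—are computed directly from this product, and I would read off that they must all equal $1$ in absolute value. The normalisation \eqref{norm2}, which forces $\bigl(\prod b_i\bigr)\bigl(\prod c_i\bigr)=1$, then rules out any compensating deviations and forces each $b_i=1$ and each $c_i=1$. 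Since the eigenvalue structure of $J$ is a basis-independent invariant of $(g,\omega)$, this pins down the Kronecker form and yields the claim.

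\textbf{The main obstacle} is the bookkeeping on the null subspace $\tilde H$. Unlike the definite two-dimensional blocks, where the correspondence between an orthonormal pair and a symplectic pair is immediate, the pairs $H_\lambda\oplus H_{-\lambda}$ of null eigenspaces mix a hyperbolic signature of $g$ with a nontrivial real part of the eigenvalue, and one must verify carefully that a mutually orthonormal-symplectic basis exists there \emph{exactly} when the real eigenvalues have modulus one (equivalently $c_i=1$). I expect this to require invoking the detailed Kronecker normal form for a symmetric–skew-symmetric pencil, citing \cite{T}, to rule out larger Jordan-type blocks and to confirm that compatibility forces the eigenspaces on $\tilde H$ to be one-dimensional with real eigenvalues $\pm 1$, matching the special form \eqref{matrix2}.
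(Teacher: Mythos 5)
Your proposal is correct and is essentially the paper's argument: the paper's proof is literally ``Follows directly from the definition,'' and what you write out --- that in a mutually orthonormal--symplectic frame $J$ is forced to be $\pm\varepsilon\Omega$ (with $\varepsilon$ the sign matrix of $g$), whose square is a diagonal sign matrix, so all blocks are two-dimensional with $b_i=c_i=1$, and conversely that the canonical orthonormal frame with $b_i=c_i=1$ is symplectic after reordering --- is exactly the direct verification being alluded to. The only superfluous step is the appeal to the normalisation \eqref{norm2} in the necessity direction: the computation of $\varepsilon\Omega$ already pins each eigenvalue's modulus to $1$ exactly (not merely up to a compensating product), so no further normalisation argument is needed.
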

\begin{proof}
Follows directly from the definition.
\end{proof}

The bundle $O_{H,g,\omega}(M)$ for a structure $(H,g)$ satisfying the compatibility condition is the bundle of frames that are mutually orthonormal with respect to $g$ and symplectic with respect to $\omega$. Proposition \ref{prop3} implies that under the compatibility condition all $\GG_{g,\omega}(q)$, $q\in M$, are isomorphic, because $J_q$ depends smoothly on $q$ and $M$ is connected. Thus $O_{H,g,\omega}(M)$ is a principal bundle with the structure group isomorphic to $\GG_{g,\omega}(q)$ for any fixed $q\in M$. The structure group will be simply denoted $\GG_{g,\omega}$. Moreover, Proposition \ref{prop2} implies that the embedding \eqref{embed1} restricted to the component of identity $\II_0(M,H,g)$ takes values in $O_{H,g,\omega}(M)$. Precisely, fixing $(q; v_1,\ldots,v_{2n},v_0)\in O_{H,g,\omega}$ we get that
\begin{equation}\label{embed2}
f\longmapsto(f(q); f_*(v_1),\ldots,f_*(v_{2n}),f_*(v_0))
\end{equation}
defines an embedding of $\II_0(M,H,g)$ to $O_{H,g,\omega}(M)$. This embedding permits to prove
\begin{thm}\label{thm3}
Let $(M,H,g)$ be an oriented contact sub-pseudo-Rie\-man\-nian manifold satisfying the compatibility condition. Then 
$$
\dim\II(M,H,g)\leq 2n+1+s^2+(n-s)^2,
$$
where $\dim M=2n+1$ and $s=\frac{1}{2}\rk\hat H$ is the multiplicity of $i=\sqrt{-1}$ as an eigenvalue of the endomorphism $J$. Moreover, the parity of $n-s$ equals to the parity of $\ind(g)$.  
\end{thm}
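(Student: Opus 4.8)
The plan is to bound $\dim\II(M,H,g)=\dim\II_0(M,H,g)$ through the embedding \eqref{embed2} of $\II_0(M,H,g)$ into the total space of the principal bundle $O_{H,g,\omega}(M)$, whose structure group is $\GG_{g,\omega}$. Since \eqref{embed2} is an embedding, the dimension of the source is at most that of the target, so
\[
\dim\II(M,H,g)\leq\dim O_{H,g,\omega}(M)=\dim M+\dim\GG_{g,\omega}=2n+1+\dim\GG_{g,\omega}.
\]
Everything then reduces to computing $\dim\GG_{g,\omega}=\dim\big(O(g_q)\cap Sp(\omega_q)\big)$ at a fixed point $q$, the last coordinate in \eqref{group} being rigid.

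Next I would reinterpret this intersection. Because $\omega_q(v,w)=g(J_q v,w)$, a linear map $A$ lies in $O(g_q)\cap Sp(\omega_q)$ exactly when $A\in O(g_q)$ and $A$ commutes with $J_q$; thus $\GG_{g,\omega}$ is the centralizer of $J_q$ in $O(g_q)$. Since $J_q$ is $g$-skew-adjoint, its eigenspaces for eigenvalues $\la,\mu$ are $g$-orthogonal unless $\mu=-\la$; in particular the splitting $H=\hat H\oplus\tilde H$ of \eqref{decomp} is $g$-orthogonal. Any $A$ in the centralizer preserves the eigenspaces of $J_q$, hence preserves $\hat H$ and $\tilde H$, and being a $g$-isometry it respects the orthogonal splitting. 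Therefore the centralizer factors as a direct product of the centralizer of $J_q|_{\hat H}$ in $O(g|_{\hat H})$ and the centralizer of $\tilde J$ in $O(g|_{\tilde H})$, and it suffices to compute the two factors separately.

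On $\hat H$ the compatibility condition (Proposition \ref{prop3}) gives $J_q^2=-\Id$, while $g$ is definite on each two-dimensional block; writing the signature of $g|_{\hat H}$ as $(2a,2b)$ with $a+b=s$, the pair $(g|_{\hat H},J_q|_{\hat H})$ is Hermitian and its centralizer is the indefinite unitary group $U(a,b)$, of real dimension $(a+b)^2=s^2$ independently of the split. On $\tilde H$ compatibility forces $\tilde J$ to be the para-complex structure \eqref{matrix2} with all $c_i=1$, so $\tilde J^2=\Id$ and the two $(n-s)$-dimensional eigenspaces are $g$-null and dually paired; choosing a basis in which $g=\left(\begin{smallmatrix}0&I\\ I&0\end{smallmatrix}\right)$ and $\tilde J=\left(\begin{smallmatrix}I&0\\ 0&-I\end{smallmatrix}\right)$, a commuting isometry has the form $\diag\big(P,(P^{-1})^T\big)$ with $P\in GL(n-s,\R)$ arbitrary, so this factor is $\cong GL(n-s,\R)$ of dimension $(n-s)^2$. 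Adding the two factors yields $\dim\GG_{g,\omega}=s^2+(n-s)^2$, which gives the claimed estimate. The parity assertion then follows from the same splitting: $g|_{\hat H}$ contributes the even index $2a$ and the neutral form $g|_{\tilde H}$ contributes $n-s$, so $\ind(g)=2a+(n-s)\equiv n-s\pmod 2$.

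I expect the main obstacle to be the two dimension counts, specifically verifying that the indefinite unitary factor attached to $\hat H$ always has real dimension $s^2$ no matter how the definite blocks distribute between positive and negative signature, and confirming that under compatibility the $\tilde H$-factor is genuinely all of $GL(n-s,\R)$ rather than a proper subgroup. The orthogonality of the eigenspace decomposition, though essential for splitting the problem, is a routine consequence of the $g$-skew-adjointness of $J_q$.
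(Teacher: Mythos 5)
Your proposal is correct, and its overall skeleton is the paper's: the embedding \eqref{embed2} into the principal bundle $O_{H,g,\omega}(M)$ reduces the bound to $\dim M+\dim\GG_{g,\omega}$, the splitting is taken along $\hat H\oplus\tilde H$, and the parity claim is derived exactly as in the paper from $\ind\bigl(g|_{\tilde H\times\tilde H}\bigr)=n-s$ plus the definiteness of $g$ on $\hat H$. Where you genuinely diverge is in the key computation that the paper isolates as Lemma \ref{lem1}. The paper works at the Lie algebra level: it puts $A\in\mathfrak{so}(l,2n-l)\cap\mathfrak{sp}(2n)$ into block form, kills the off-diagonal blocks, and counts independent entries ($s^2$ for the $\hat H$-block via the above-diagonal versus above-antidiagonal conditions, at most $t^2$ for the $\tilde H$-block via the relations on the $C_{ij}$), yielding an \emph{upper bound} valid for an arbitrary non-degenerate pair $(g,\omega)$. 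You instead use compatibility from the outset to recognize $O(g_q)\cap Sp(\omega_q)$ as the centralizer of $J_q$ in $O(g_q)$ and identify the two factors as concrete classical groups: $U(a,b)$ on $\hat H$ (from $J^2=-\Id$ and the Hermitian form $g+i\omega$) and $GL(n-s,\R)$ on $\tilde H$ (from the para-complex structure with null, dually paired eigenspaces). The steps you flag as delicate do check out: $\dim_{\R}U(a,b)=(a+b)^2$ independently of the signature split, and the commutant on $\tilde H$ in the null basis is all of $GL(n-s,\R)$. Your route buys more than the paper's for Theorem \ref{thm3} itself --- you obtain the isomorphism type $\GG_{g,\omega}\cong U(a,b)\times GL(n-s,\R)$, not merely its dimension, which also makes the sharpness on the Heisenberg group (Proposition \ref{prop4}) transparent. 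The one thing it does not replace is Lemma \ref{lem1} in its full generality: that lemma is reused in the proof of Theorem \ref{thm1}, where the frequencies need not all equal $1$, $J$ need not square to $\pm\Id$ on the two summands, and only the inequality $\dim\GG_{g,\omega}(q)\leq s^2+t^2$ survives.
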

\begin{proof}
We recall that $g$ restricted to any two-dimensional component $H_\la$ of $\hat H$ in the decomposition \eqref{decomp2} is definite. Additionally $g$ restricted to $\tilde H$ has index equal to $\frac{1}{2}\rk\tilde H$. Thus 
$$
\ind(g)=\frac{1}{2}\rk\tilde H\mod 2
$$
and since $\frac{1}{2}\rk\tilde H=n-s$ the last statement of the Theorem follows.

Therefore, it is sufficient to compute the dimension of $\GG_{g,\omega}$ in order to complete the proof, because the existence of the embedding \eqref{embed2} implies
$$
\dim\II_0(M,H,g)\leq\dim M+\dim\GG_{g,\omega},
$$
and $\dim\II(M,H,g)=\dim\II_0(M,H,g)$. The result follows from the following general Lemma that will be also used later in the proof of Theorem \ref{thm1}.
\begin{lem}\label{lem1}
Let $s=\frac{1}{2}\rk \hat H$ and $t=\frac{1}{2}\rk \tilde H$, where $\hat H$ and $\tilde H$ are defined by the decomposition \eqref{decomp} of the operator $J$ for a pair $(g,\omega)$ of arbitrary non-degenerate symmetric and skew-symmetric bi-linear forms on $H$, $\rk H=2n$. Then
$$
\dim\left(O(g)\cap Sp(\omega)\right)=s^2+t^2.
$$
\end{lem}
\begin{proof}
We shall consider the Lie algebra $\g$ of $O(g)\cap Sp(\omega)$, because $\dim\g=\dim\left(O(g)\cap Sp(\omega)\right)$. Let $A=(a_{i,j})_{i,j=1,\ldots,2n}\in\mathfrak{g}$. Then, according to the decomposition \eqref{decomp} $A$ decomposes into the following block form
$$
A=\left(
\begin{matrix}
      B & D\\
      D' & C
\end{matrix}\right)
$$
where $A$ is of dimension $2s\times 2s$, $D$ is of dimension $2s\times 2t$, $D'$ is of dimension $2t\times 2s$ and $C$ is of dimension $2t\times 2t$. Now, since $A$ preserves the eigenspces of $J$ and $g$ restricted to the eigenspaces, we get from properties (P2) and (P3) of $J$ that $D=D'=0$. Thus we shall estimate the possible number of independent entries of $B$ and $C$.

Let us consider $B$ first. In order to get an estimate we can assume that all $b_i=1$. Otherwise $B$ would decompose into smaller blocks. So, we can choose a basis in $\hat H$ such that $g$ is diagonal and $\omega$ is a standard symplectic form. Then, on the one hand $B$ is completely determined by entries above the diagonal, because $B\in\mathfrak{so}(\hat l,2s-\hat l)$, where $\hat l=\ind g|_{\hat H\times\hat H}$. On the other hand $B$ is completely determined by the entries above the anti-diagonal (including the anti-diagonal itself), because $B\in\mathfrak{sp}(2s)$. Thus, $B$ has $s^2$ independent entries.

Now, let us consider $C$. We have $\ind g|_{\tilde H\times\tilde H}=\frac{1}{2}\rk\tilde H=t$. Thus, we can assume that $g$ is diagonal 
$$
g|_{\tilde H\times\tilde H}=\diag(\underbrace{-1,\ldots,-1}_t,\underbrace{+1,\ldots,+1}_t).
$$
Moreover, due to (P2), we can assume $\omega|_{\tilde H\times\tilde H}$ is given by a non-degenerate skew-symmetric matrix of the form
$$
\omega|_{\tilde H\times\tilde H}=\left(
\begin{matrix}
      0 & \tilde\omega_{12}\\
      -\tilde\omega^T_{12} & 0
\end{matrix}\right),
$$
where $\tilde\omega_{12}$ is of dimension $t\times t$ (c.f. the normal forms in \cite{T}). Let
$$
C=\left(
\begin{matrix}
      C_{11} & C_{12}\\
      C_{21} & C_{22}
\end{matrix}\right),
$$
where all $C_{ij}$ are of dimension $t\times t$. Then $C_{11}$ and $C_{22}$ are skew-symmetric and $C_{12}=C_{21}^T$ due to $C\in\mathfrak{so}(t,t)$. Moreover $C_{11}=\tilde\omega_{12}C_{22}\tilde\omega_{12}^{-1}$ and $C_{12}=\tilde\omega_{12}C_{12}^T\tilde\omega_{12}^{-1}$ due to $C\in \mathfrak{sp}(\omega)$. Thus, $C$ has at most $t^2$ independent entries.

Finally let us notice that the maximal dimensions are attained if all $b_i=1$ and $\tilde J$ is of the form \eqref{matrix2} with all $c_i=1$. 
\end{proof}
\end{proof}

\subsection{Left invariant structures on the Heisenberg group}
We will show that the upper bound on the dimension of the group of isometries from Theorem \ref{thm3} is attained. In particular, taking into account the parity of $\ind(g)$, we will show that there are structures with the isometry groups of dimensions as in Theorem \ref{thm1} and formula \eqref{est3}.

To this end we consider left-invariant structures on the Heisenberg group. We recall that the Heisenberg group is realised as the space $\R^{2n+1}$ with the contact distribution $H$ defined as follows. Suppose we have coordinates $x_1,\ldots,x_n,y_1,\ldots,y_n,z$ on $\R^{2n+1}$ which will be denote by $(x,y,z)$ for short. Let
\begin{equation}\label{vfields}
 X_i=\frac{\partial}{\partial x_i}+\frac{1}{2}y_i\frac{\partial}{\partial z}, \qquad
 Y_i=\frac{\partial}{\partial y_i}-\frac{1}{2}x_i\frac{\partial}{\partial z},
\end{equation}
$i=1,\ldots,n$. Define $H$ to be 
$$
H=\spn\{X_1,Y_1,\ldots,X_n,Y_n\} .
$$
We equip $(\R^{2n+1}, H)$ with metric $g$ by declaring the frame $(X_1,Y_1,\allowbreak\ldots,X_n,Y_n)$ to be orthonormal and such that
$$
g(X_i,X_i)=t_i,\qquad g(Y_i,Y_i)=s_i,
$$
where $t_i,s_i \in \{-1,1\}$ depending on the signature of $g$. The vector fields \eqref{vfields} are left invariant fields with respect to the standard multiplication on the Heisenberg group
\begin{equation}\label{mult}
\begin{aligned}
&(x_1,\dots,x_n,y_1,\dots,y_n,z)*(x'_1,\dots,x'_n,y'_1,\dots,y'_n,z')= \\
&(x_1+x'_1,\dots.x_n+x'_n,y_1+y'_1,\dots,y_n+y'_n,z+z'+\frac{1}{2}\sum_{i=11}^{n}(y_ix'_i-y'_ix_i)).
\end{aligned}
\end{equation}
The symplectic structure on $H$ is the standard one
$$
\omega=\sum_{i=1}^n dx_i\wedge dy_i.
$$

Take a matrix $\sigma\in Sp(\omega)\cap O(g)$. We will show that the map $f_\sigma\colon\R^{2n+1}\to\R^{2n+1}$ defined by
\begin{equation}\label{isom1}
f_\sigma(x,y,z)=(\sigma\cdot(x,y)^T,z)
\end{equation}
is an isometry. Denote $f_\sigma=(f_\sigma^1,\ldots,f_\sigma^{2n},f_\sigma^{2n+1})$. Then
\begin{equation}\label{isom2}
f_\sigma^i(x,y,z)=\sum_{j=1}^n(\sigma_{i,j}x_j+\sigma_{i,n+j} y_j)
\end{equation}
for $i=1,\ldots,2n$. First we have
\begin{lem}\label{lem2}
For any $\sigma\in Sp(\omega)$ 
$$
{f_\sigma}_*(X_i)(x,y,z)= \sum_{j=1}^n \sigma_{j,i} X_j(f_\sigma(x,y,z))+ \sum_{j=1}^n \sigma_{n+j,i}Y_j(f_\sigma(x,y,z))
$$
and 
$$
{f_\sigma}_*(Y_i)(x,y,z)= \sum_{j=1}^n \sigma_{j,n+i} X_j(f_\sigma(x,y,z)) + \sum_{j=1}^n \sigma_{n+j,n+i}Y_j(f_\sigma(x,y,z)).
$$
In particular, $f_\sigma$ preserves $H$.
\end{lem}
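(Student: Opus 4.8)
The plan is to unwind the pushforward straight from the definition $({f_\sigma}_*V)(f_\sigma(p))=d(f_\sigma)_p\bigl(V(p)\bigr)$, exploiting the fact that $f_\sigma$ is \emph{linear} in $(x,y)$ and is the identity in $z$. First I would name target coordinates $(u,v,w)$, so that by \eqref{isom2} one has $u_k=\sum_j(\sigma_{k,j}x_j+\sigma_{k,n+j}y_j)$, $v_k=\sum_j(\sigma_{n+k,j}x_j+\sigma_{n+k,n+j}y_j)$ and $w=z$. Because these are linear, the differential is read off with no computation: $d(f_\sigma)(\partial_{x_i})=\sum_k\sigma_{k,i}\,\partial_{u_k}+\sum_k\sigma_{n+k,i}\,\partial_{v_k}$, likewise $d(f_\sigma)(\partial_{y_i})=\sum_k\sigma_{k,n+i}\,\partial_{u_k}+\sum_k\sigma_{n+k,n+i}\,\partial_{v_k}$, and $d(f_\sigma)(\partial_z)=\partial_w$. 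Applying this to $X_i=\partial_{x_i}+\tfrac12 y_i\partial_z$ gives ${f_\sigma}_*(X_i)=\sum_k\sigma_{k,i}\partial_{u_k}+\sum_k\sigma_{n+k,i}\partial_{v_k}+\tfrac12 y_i\partial_w$, which I then compare with the asserted combination $\sum_k\sigma_{k,i}X_k(f_\sigma(p))+\sum_k\sigma_{n+k,i}Y_k(f_\sigma(p))$, where at the target point $X_k=\partial_{u_k}+\tfrac12 v_k\partial_w$ and $Y_k=\partial_{v_k}-\tfrac12 u_k\partial_w$.

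In this comparison the $\partial_{u_k}$ and $\partial_{v_k}$ coefficients agree trivially, so the whole statement for $X_i$ collapses to a single scalar identity in the $\partial_w$-coefficient, namely $y_i=\sum_k(\sigma_{k,i}v_k-\sigma_{n+k,i}u_k)$; the analogous reduction for $Y_i$ yields $-x_i=\sum_k(\sigma_{k,n+i}v_k-\sigma_{n+k,n+i}u_k)$. This is the heart of the proof and the only point at which the hypothesis $\sigma\in Sp(\omega)$ enters. Substituting the formulas for $u_k,v_k$ turns each right-hand side into a bilinear form in $(x,y)$ whose coefficients are sums such as $\sum_k(\sigma_{k,i}\sigma_{n+k,j}-\sigma_{n+k,i}\sigma_{k,j})$. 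Writing $\sigma=\left(\begin{smallmatrix}P&Q\\ R&S\end{smallmatrix}\right)$ in $n\times n$ blocks, these coefficients are exactly the entries of $P^TR-R^TP$, $P^TS-R^TQ$, $Q^TR-S^TP$ and $Q^TS-S^TQ$. The defining relation $\sigma^T\Omega\sigma=\Omega$ of $Sp(\omega)$, read blockwise, says precisely that $P^TR$ and $Q^TS$ are symmetric and that $P^TS-R^TQ=I_n$ (whence $Q^TR-S^TP=-I_n$ by transposition). Feeding these in makes the coefficient of each $x_j$ vanish and the coefficient of $y_j$ equal $\delta_{ij}$ in the first identity, giving $y_i$, and symmetrically gives $-x_i$ in the second.

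I expect the only genuine obstacle to be bookkeeping: keeping the four quadratic expressions aligned with the correct blocks of the symplectic relation and tracking the index shift between $\sigma_{k,i}$, $\sigma_{n+k,i}$ and the blocks $P,Q,R,S$, rather than any conceptual difficulty. Finally, since both ${f_\sigma}_*(X_i)$ and ${f_\sigma}_*(Y_i)$ come out as $\R$-linear combinations of the frame $(X_1,\dots,X_n,Y_1,\dots,Y_n)$ evaluated at $f_\sigma(p)$, they lie in $H_{f_\sigma(p)}$; as these vector fields span $H$ and $f_\sigma$ is a diffeomorphism, this yields ${f_\sigma}_*(H)=H$, i.e. $f_\sigma$ preserves $H$.
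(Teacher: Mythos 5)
Your proposal is correct and follows essentially the same route as the paper: compute the pushforward of the linear map directly, observe that the $\partial_{x}$- and $\partial_{y}$-components match automatically, reduce the claim to the single scalar identity in the $\partial_z$-coefficient, and verify that identity from the symplectic relation (the paper writes it as $\sigma\,\Omega\,\sigma^{T}=\Omega$, which is equivalent to your $\sigma^{T}\Omega\,\sigma=\Omega$ for invertible $\sigma$). You merely make the blockwise bookkeeping explicit where the paper leaves it to the reader, so there is nothing to add.
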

\begin{proof}
We will prove the first equality only. Using \eqref{isom2} we directly compute
$$
{f_\sigma}_*(X_i)=\sum_{j=1}^n\sigma_{j,i}\frac{\partial}{\partial x_j}+ \sum_{j=1}^n \sigma_{n+j,i}\frac{\partial}{\partial y_j}+\frac{1}{2}y_i\frac{\partial}{\partial z}.
$$
Now, it is enough to show that 
$$
\sum_{j=1}^n \sigma_{j,i}f_\sigma^{n+j}(x,y,z)-\sum_{j=1}^n \sigma_{n+j,i}f_\sigma^j(x,y,z)=y_i.
$$
However, using \eqref{isom2} again, we have
$$
\begin{aligned}
&\sum_{j=1}^n\sigma_{j,i}f_\sigma^{n+j}(x,y,z)-\sum_{j=1}^n\sigma_{n+j,i}f_\sigma^j(x,y,z)= \\ 
&\sum_{j,k=1}^n(\sigma_{n+j,k}\sigma_{j,i}-\sigma_{j,k}\sigma_{n+j,i})x_k + 
 \sum_{j,k=1}^n(\sigma_{n+j,n+k} \sigma_{j,i}-\sigma_{j,n+k} \sigma_{n+j,i})y_k\\
\end{aligned}
$$
and the lemma follows from the fact that $\omega$ is the standard symplectic form, i.e.~$\sigma\,\Omega\,\sigma^T= \Omega$, where $\Omega$ is given by \eqref{standard_sympl}.
\end{proof}

Now, we can prove the following
\begin{prop}\label{prop4}
The group of orientation preserving isometries of the left-invariant contact sub-pseudo-Riemannian structure defined above on the Heisenberg group is isomorphic to 
\begin{equation}\label{flatgroup}
\R^{2n+1} \ltimes \left(Sp(\omega)\cap O(g)\right).
\end{equation}
\end{prop}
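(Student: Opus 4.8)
The plan is to produce two explicit families of orientation-preserving isometries—the left translations and the linear maps $f_\sigma$—and then to invoke the rigidity of Theorem \ref{thm2} to show that these already account for every isometry. First I would record the two families. Since the frame $(X_1,Y_1,\ldots,X_n,Y_n)$ consists of left-invariant vector fields and $g$ is defined by declaring them orthonormal, every left translation $L_p$ of the Heisenberg group preserves both $H$ and $g$, and its differential in this left-invariant frame is the identity, so each $L_p$ is an orientation-preserving isometry; these form a subgroup isomorphic to the Heisenberg group, i.e.\ to $\R^{2n+1}$ with the multiplication \eqref{mult}. For the second family, fix $\sigma\in Sp(\omega)\cap O(g)$. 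Lemma \ref{lem2} shows that $f_\sigma$ preserves $H$ and that, in the orthonormal frame ordered as $(X_1,\ldots,X_n,Y_1,\ldots,Y_n)$, one has ${f_\sigma}_*(e_i)=\sum_k\sigma_{k,i}e_k$, so that ${f_\sigma}_*|_H$ is represented by the matrix $\sigma$ at every point. A one-line computation $g({f_\sigma}_*e_i,{f_\sigma}_*e_k)=(\sigma^Tg\sigma)_{ik}=g_{ik}$ using $\sigma\in O(g)$ then verifies condition (D2), so $f_\sigma$ is an isometry; orientation preservation holds because $\det\sigma=1$ for $\sigma\in Sp(\omega)$ and $f_\sigma$ fixes the $z$-direction.

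Next I would pin down the group structure. A direct computation from \eqref{mult}, using that the $z$-component of a product equals $z+z'-\tfrac12\omega\big((x,y),(x',y')\big)$ together with the relation $\sigma\,\Omega\,\sigma^T=\Omega$, shows that each $f_\sigma$ is a group automorphism of the Heisenberg group. Consequently $f_\sigma\circ L_p\circ f_\sigma^{-1}=L_{f_\sigma(p)}$, so the left translations form a normal subgroup normalised by the maps $f_\sigma$, while $f_\sigma\circ f_{\sigma'}=f_{\sigma\sigma'}$ gives a homomorphism $\sigma\mapsto f_\sigma$. Because a nontrivial translation moves the identity $e$ whereas every $f_\sigma$ fixes $e$, the two subgroups intersect trivially, and the subgroup they generate is exactly the semidirect product \eqref{flatgroup} with $Sp(\omega)\cap O(g)$ acting on $\R^{2n+1}$ through the automorphisms $f_\sigma$.

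The converse—that there are no further orientation-preserving isometries—is the heart of the matter. Given an arbitrary such isometry $f$, I would compose with $L_{f(e)^{-1}}$ to reduce to the case $f(e)=e$. By Proposition \ref{prop1} and Proposition \ref{prop2} the differential $f_*(e)$ preserves $g$ and $\omega$ on $H_e$, hence in the orthonormal symplectic frame it is represented by some $\sigma\in O(g)\cap Sp(\omega)$. Replacing $f$ by $f_\sigma^{-1}\circ f$, I obtain an orientation-preserving isometry fixing $e$ whose differential on $H_e$ is the identity. Since $f$ also preserves the canonical extended metric $G$ and the canonical Reeb field $X_\alpha$, its full differential at $e$ is the identity, and the uniqueness clause of Theorem \ref{thm2} forces $f_\sigma^{-1}\circ f=\Id$, i.e.\ $f=f_\sigma$. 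Undoing the initial translation then writes the original isometry as $L_{f(e)}\circ f_\sigma$, an element of \eqref{flatgroup}.

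The main obstacle I anticipate lies in this converse step, specifically in upgrading the information that $f_*(e)$ is trivial on $H_e$ to the triviality of the entire $1$-jet of $f$ at $e$. This requires using that $G$, the Reeb field $X_\alpha$, and the orientation are all canonically determined by $(H,g)$, so that triviality of the differential along $H_e$ together with orientation preservation pins down its action on the transverse $X_\alpha$-direction as well; only with the full differential under control does the pseudo-Riemannian rigidity packaged in Theorem \ref{thm2} yield $f=\Id$. Care is also needed to confirm that $f_*(e)$ genuinely lands in $Sp(\omega)$ rather than being merely anti-symplectic, which is precisely where the restriction to orientation-preserving isometries (Proposition \ref{prop2}) is used.
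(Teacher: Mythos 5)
Your proposal is correct and follows essentially the same route as the paper: exhibit the left translations and the maps $f_\sigma$ (whose isometry property comes from Lemma \ref{lem2}), then rule out further isometries by rigidity of the $1$-jet at a single point. The paper compresses your entire converse argument into the one-line remark that there can be no more isometries ``due to the embedding \eqref{embed2}''; your reduction to an orientation-preserving isometry fixing $e$ with identity differential on $H_e$ (and hence, via the canonical $G$ and Reeb field, identity full differential) is exactly the unpacking of the injectivity of that embedding.
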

\begin{proof}
If $\sigma\in Sp(\omega)\cap O(g)$ then the formulae for ${f_\sigma}_*(X_i)$ and ${f_\sigma}_*(Y_i)$ in Lemma \ref{lem2} imply that $f_\sigma$ is an isometry. Thus any $\sigma\in Sp(\omega)\cap O(g)$ defines an isometry of $(H,g)$ and we get the second factor in \eqref{flatgroup}. The first factor in \eqref{flatgroup} comes from left translations. There can not be more isometries due to the embedding \eqref{embed2}.
\end{proof}

\begin{rem}
%There are isometries of the structures on the Heisenberg group that change the orientation. E.g.\ in dimension 3: $(x,y,z)\mapsto(y,x,-z)$. 
%The full group of isometries is isomorphic to the product $\R^{2n+1} \ltimes \left(\tilde{Sp}(\omega)\cap O(g)\right)$ where $\tilde{Sp}(\omega)$ is 
%the group preserving $\omega$ up to the sign.
Let us remark that the full group of isometries is isomorphic to the product $\R^{2n+1} \ltimes \left(\tilde{Sp}(\omega)\cap O(g)\right)$ where $\tilde{Sp}(\omega)$ is 
the group preserving $\omega$ up to the sign.
\end{rem}

\section{Regularity condition}\label{sec4}

\subsection{Isometries of regular structures}
Before proceeding to the general case announced in Theorem \ref{thm1} we will describe a class of sub-pseudo-Riemannian structures which generalize those satisfying the compatibility condition but, at the same time, simple enough so that the isometry groups can be explicitly computed.

\begin{defin}
Let $(M,H,g)$ be a contact sub-pseudo-Riemannian manifold of dimension $2n+1$. The metric $(H,g)$ is said to satisfy \emph{the regularity condition} if there exists a global orthonormal frame $X_1,\dots,X_{2n}$ with respect to which the symplectic form  $\omega$ on $H$ can be written as
$$
\omega=\sum_{i=1}^{n}b_i\alpha^i\wedge \alpha^{n+i}
$$
where $\alpha^1,\ldots,\alpha^{2n}$ is the co-frame dual to $X_1,\dots,X_{2n}$, and $b_1,\dots,b_n$ are smooth functions such that there exist positive integers $k_1,\dots,k_r$, $k_1+\dots+k_r=n$, for which
\begin{equation}\label{reg_b}
b_1=\cdots =b_{k_1} \neq b_{k_1+1}=\cdots =b_{k_1+k_2}\neq \cdots \neq
b_{k_1+\dots+k_{r-1}+1}=\cdots =b_n
\end{equation}
holds on the whole of $M$.
\end{defin}

Note that any sub-Riemannian or sub-Lorentzian structure fulfils the regularity condition at least on an open subset of $M$.
Clearly, the functions $b_i$ are related to either real or purely imaginary eigenvalues of the operator $J$. In fact, if $(M,H,g)$ is regular then $\tilde J$ has necessarily form \eqref{matrix2}. Let
$$
H^i=\spn\{X_j, X_{n+j}\ |\ k_1+\ldots+k_{i-1}+1\leq j\leq k_1+\ldots+k_i\}.
$$
Then all $H^i$, $i=1,\ldots,r$, are invariant with respect to $J$ and $H$ splits into the Whitney sum 
$$
H=H^1\oplus\cdots\oplus H^r.
$$
Moreover, the groups $\GG_{g,\omega}(q)$, $q\in M$, split into the direct product 
\begin{equation}\label{group_red}
\GG_{g,\omega}(q)\simeq\left(Sp(\omega|_{H^1_q})\cap O(g|_{H^1_q})\right) \oplus \cdots \oplus \left(Sp(\omega|_{H^r_q})\cap O(g|_{H^r_q})\right).
\end{equation}
All groups $\GG_{g,\omega}(q)$ are isomorphic under the regularity condition and will be shortly denoted $\GG_{g,\omega}$. Consequently, the bundle $O_{H,g,\omega}(M)$ admits a reduction to a $\GG_{g,\omega}$-structure which can be realized as the set of all such frames $(q;v_1,\allowbreak\ldots, v_{2n},v_0)\in O_{H,g,\omega}$ that
$$
v_j,v_{n+j}\in H^i_q,\qquad k_1+\ldots+k_{i-1}+1\leq j\leq k_1+\ldots+k_i,
$$
for $i=1,\ldots,r$.
The presented considerations lead to the following
\begin{thm}\label{thm4} 
Let $(M,H,g)$ be an oriented contact sub-pseudo-Rie\-man\-nian manifold satisfying the regularity condition. Then 
$$
\dim \II(M,H,g)\leq 2n+1+s_1^2+(k_1-s_1)^2+\dots+s_r^2+(k_r-s_r)^2,
$$
where $s_i=\frac{1}{2}\rk(H^i\cap\hat H)$.
\end{thm}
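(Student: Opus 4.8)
The plan is to mirror the proof of Theorem~\ref{thm3}, the only difference being that the single use of Lemma~\ref{lem1} is replaced by one use on each summand of the splitting $H=H^1\oplus\cdots\oplus H^r$. Under the regularity condition the bundle $O_{H,g,\omega}(M)$ reduces to a principal $\GG_{g,\omega}$-bundle, and since every isometry preserves $g$, the symplectic form $\omega$ (Proposition~\ref{prop2}), and hence the eigenspace decomposition of $J=g^{-1}\omega$, the embedding analogous to \eqref{embed2} takes $\II_0(M,H,g)$ into this reduced bundle. Together with $\dim\II(M,H,g)=\dim\II_0(M,H,g)$ this gives
$$
\dim\II(M,H,g)\leq\dim M+\dim\GG_{g,\omega}=2n+1+\dim\GG_{g,\omega},
$$
so the whole problem reduces to computing $\dim\GG_{g,\omega}$.

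Here I would simply read off the direct product decomposition \eqref{group_red}, which is already available under the regularity condition, to obtain
$$
\dim\GG_{g,\omega}=\sum_{i=1}^r\dim\left(Sp(\omega|_{H^i})\cap O(g|_{H^i})\right),
$$
and then treat each factor by Lemma~\ref{lem1} applied to the non-degenerate pair $(g|_{H^i},\omega|_{H^i})$ on the $2k_i$-dimensional space $H^i$. The operator $J$ restricted to $H^i$ carries the single frequency $b_i$, so its two-dimensional blocks are either elliptic (eigenvalues $\pm i b_i$, with $g$ definite), making up $H^i\cap\hat H$ of half-rank $s_i$, or hyperbolic (eigenvalues $\pm b_i$, with $g$ null), making up $H^i\cap\tilde H$ of half-rank $k_i-s_i$. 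Thus the decomposition \eqref{decomp} for this restricted pair is exactly $H^i=(H^i\cap\hat H)\oplus(H^i\cap\tilde H)$, and Lemma~\ref{lem1} returns $\dim(Sp(\omega|_{H^i})\cap O(g|_{H^i}))=s_i^2+(k_i-s_i)^2$. Summing over $i$ and adding $\dim M=2n+1$ produces the claimed bound.

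The step requiring the most attention---and the very reason the regularity refinement is needed---is checking that each factor really attains the full value $s_i^2+(k_i-s_i)^2$. In the proof of Lemma~\ref{lem1} this maximum is reached precisely when all frequencies inside the block coincide: if two distinct frequencies were present the matrix $B$ (and correspondingly $\tilde J$) would split into smaller blocks and the dimension would drop. It is exactly the constancy \eqref{reg_b} of the $b_i$ within each $H^i$ that keeps the elliptic part at a single eigenvalue $\pm i b_i$ and the hyperbolic part of the form \eqref{matrix2} with all $c_j=b_i$, so Lemma~\ref{lem1} is in its equality case on every factor; meanwhile the direct product \eqref{group_red} keeps blocks with different frequencies from combining into a larger symmetry group. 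Everything else is bookkeeping identical to Theorem~\ref{thm3}.
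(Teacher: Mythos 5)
Your argument is correct and is essentially the paper's own proof: the paper also bounds $\dim\II(M,H,g)$ by $\dim M+\dim\GG_{g,\omega}$ and then applies Lemma~\ref{lem1} to each factor of the decomposition \eqref{group_red}. (Your closing paragraph about when each factor \emph{attains} $s_i^2+(k_i-s_i)^2$ is not needed for the stated upper bound, but it does no harm.)
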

\begin{proof}
Indeed, $\dim\II(M,H,g)\leq\dim M+\dim\GG_{g,\omega}$ and the result follows from Lemma \ref{lem1} applied to each factor of $\GG_{g,\omega}$ separately.
\end{proof}

\subsection{Left invariant regular structures}
Now we are going to show that the upper bound on the dimension of the isometry group given in Theorem \ref{thm4} is attained. To this end, fix positive real numbers $b_i$, $i=1,\dots,n$, as in \eqref{reg_b} and define the following multiplication on $\R^{2n+1}$
\begin{equation}
\begin{aligned}\label{mult2}
&(x_1,\dots,x_n,y_1,\dots,y_n,z)*(x'_1,\dots,x'_n,y'_1,\dots,y'_n,z')= \\  
&(x_1+x'_1,\dots,x_n+x'_n,y_1+y'_1,\dots,y_n+y'_n,\frac{1}{2}\sum_{i=1}^n b_i(y_ix'_i-y'_ix_i)).
\end{aligned}
\end{equation}
The multiplication \eqref{mult2} can be treated as a deformation of the standard multiplication \eqref{mult}. Now it is not difficult to see that the left invariant vector fields with respect to this multiplication are given by formulae 
\begin{equation}\label{vfields2}
X_i=\frac{\partial}{\partial x_i}+\frac{b_i}{2}y_i\frac{\partial}{\partial z},\qquad Y_i=\frac{\partial}{\partial y_i}-\frac{b_i}{2}x_i\frac{\partial}{\partial z}.
\end{equation}
Let $H=\spn\{X_1,Y_1,\ldots,X_n,Y_n\}$ and define metric $g$ by declaring the basis $X_1,Y_1,\dots,X_n,Y_n$ to be orthonormal with 
$$
g(X_i,X_i)=p_i,\qquad g(Y_i,Y_i)=r_i,
$$
where $p_i,r_i\in\{-1,+1\}$ depending on the index of the metric, $i=1,\dots,n$. It clear that the canonical contact form is 
$$
\alpha=dz-\sum_{i=1}^{n}(1/2)b_i(y_i dx_i-x_i dy_i)
$$
and 
$$
\omega=\sum_{i=1}^{n}b_idx_i\wedge dy_i.
$$
It follows from the construction that the left translations with respect to \eqref{mult2} are isometries of $(\R^{2n+1},H,g)$, because vector fields \eqref{vfields2} are left invariant. Moreover, any $\sigma\in\GG_{g,\omega}$ decomposes according to the splitting \eqref{group_red}. Performing similar calculations as in Lemma \ref{lem2} for each factor of this decomposition one can prove
\begin{prop}
The group of orientation preserving isometries of the left invariant contact sub-pseudo-Riemannian structure $(H,g)$ constructed above on $\R^{2n+1}$ is isomorphic to
$$
\R^{2n+1}\ltimes \GG_{g,\omega},
$$
where $\GG_{g,\omega}$ is given by \eqref{group_red}.
\end{prop}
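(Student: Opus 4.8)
The plan is to follow the proof of Proposition~\ref{prop4} almost verbatim, the only new ingredient being the presence of the frequencies $b_i$ in the vector fields \eqref{vfields2} and in the symplectic form $\omega=\sum_i b_i\,dx_i\wedge dy_i$. As already noted, the left translations with respect to \eqref{mult2} are isometries since \eqref{vfields2} are left invariant, and they account for the factor $\R^{2n+1}$ in the semidirect product. It therefore remains to produce the factor $\GG_{g,\omega}$ from linear maps fixing the origin and to prove that these, together with the translations, exhaust all orientation preserving isometries.

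First I would establish the analogue of Lemma~\ref{lem2} for the deformed structure. For $\sigma\in Sp(\omega)$ define $f_\sigma$ by \eqref{isom1}, so that $f_\sigma^i=\sum_j(\sigma_{i,j}x_j+\sigma_{i,n+j}y_j)$ for $i\le 2n$ and $f_\sigma^{2n+1}=z$. A direct computation as in Lemma~\ref{lem2} gives
$$
{f_\sigma}_*(X_i)=\sum_{j=1}^n\sigma_{j,i}\,\partial_{x_j}+\sum_{j=1}^n\sigma_{n+j,i}\,\partial_{y_j}+\frac{b_i}{2}\,y_i\,\partial_z,
$$
and matching this with $\sum_j\sigma_{j,i}X_j(f_\sigma)+\sum_j\sigma_{n+j,i}Y_j(f_\sigma)$ reduces to the single identity
$$
\sum_{j=1}^n b_j\bigl(\sigma_{j,i}f_\sigma^{n+j}-\sigma_{n+j,i}f_\sigma^j\bigr)=b_iy_i.
$$
Substituting the expressions for $f_\sigma^j$ and $f_\sigma^{n+j}$, the coefficient of $x_k$ on the left is the $(i,k)$ entry of the top-left block of $\sigma^T\Omega_b\sigma$, while the coefficient of $y_k$ is the $(i,n+k)$ entry of its top-right block, where $\Omega_b=\left(\begin{smallmatrix}0&B\\-B&0\end{smallmatrix}\right)$ with $B=\diag(b_1,\dots,b_n)$ is the matrix of $\omega$. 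Since $\sigma\in Sp(\omega)$ means $\sigma^T\Omega_b\sigma=\Omega_b$, the $x_k$ coefficients vanish and the $y_k$ coefficients reduce to $b_i\delta_{ik}$, which is exactly what is required; the computation for $Y_i$ is identical, and in particular $f_\sigma$ preserves $H$. It is convenient here to use that any $\sigma\in\GG_{g,\omega}$ is block diagonal with respect to the splitting \eqref{group_red}, and that on each block $H^i$ all the $b_j$ coincide, so that $\omega|_{H^i}$ is a constant multiple of the standard symplectic form and the verification on each block is literally Lemma~\ref{lem2} up to this scalar.

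With this in hand, if moreover $\sigma\in O(g)$ then ${f_\sigma}_*$ sends the orthonormal frame $(X_1,Y_1,\dots,X_n,Y_n)$ to a frame related to it by $\sigma$, hence preserves $g$, so that $f_\sigma$ is an isometry; this produces the factor $\GG_{g,\omega}=Sp(\omega)\cap O(g)$, and the semidirect product structure follows exactly as in Proposition~\ref{prop4}. Finally, to see that there are no further orientation preserving isometries I would argue through the embedding \eqref{embed2}: given such an isometry $f$, compose with a left translation $L$ so that $L\circ f$ fixes the origin $0$. By Proposition~\ref{prop2} the differential $(L\circ f)_*$ at $0$ preserves $g_0$, $\omega_0$ and the orientation, hence lies in $\GG_{g,\omega}(0)$ and agrees with ${f_\sigma}_*$ at $0$ for a unique $\sigma\in\GG_{g,\omega}$. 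Then $f_\sigma^{-1}\circ L\circ f$ fixes $0$ with identity differential there, so by the uniqueness part of Theorem~\ref{thm2} it is the identity, giving $f=L^{-1}\circ f_\sigma\in\R^{2n+1}\ltimes\GG_{g,\omega}$.

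The main obstacle, such as it is, lies in the bookkeeping of the displayed identity: one must check that the frequencies $b_i$ appearing in \eqref{vfields2} are absorbed correctly by the \emph{weighted} symplectic condition $\sigma^T\Omega_b\sigma=\Omega_b$ rather than by the unweighted one of Lemma~\ref{lem2}. The block decomposition \eqref{group_red} trivialises this point, since on each factor the relation reduces, after rescaling by the common frequency, to the Heisenberg computation already carried out in Lemma~\ref{lem2}; everything else is a formal repetition of the arguments of Section~\ref{sec3}.
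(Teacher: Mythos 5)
Your proposal is correct and follows essentially the same route as the paper: the paper's argument for this proposition is precisely that left translations give the $\R^{2n+1}$ factor, that the computation of Lemma~\ref{lem2} carries over blockwise (with the common frequency on each $H^i$ absorbed by the weighted symplectic condition, exactly as you check), and that the embedding \eqref{embed2} rules out further isometries. Your write-up merely makes explicit the details the paper leaves as ``similar calculations.''
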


\section{General case}\label{sec5}
\subsection{Symbol algebra}
Let $(M, H, g)$ be an oriented contact sub-pseudo-Riemannian manifold. Let $\g(H)(q)$ be the symbol algebra of $H$ at point $q\in M$. It is a two-step nilpotent graded Lie algebra
$$
\g(H)(q)=\g_{-1}(q)\oplus\g_{-2}(q)
$$
where
$$
\g_{-1}(q)=H_q, \qquad \g_{-2}(q)=T_qM/H_q.
$$
The Lie bracket $\g_{-1}(q)\wedge\g_{-1}(q)\to\g_{-2}(q)$ is defined in terms of the Lie bracket of vector fields on $M$ as follows. Let $v,w\in \g_{-1}(q)$ and let $X_v$ and $X_w$ be two extensions of $v$ and $w$, respectively, to sections of $H$ in a neighbourhood of $q$. Then
$$
[v,w]=[X_v,X_w](q) \mod H_q
$$
does not depend on the extension and defines the Lie bracket in $\g(H)(q)$. Clearly, the Lie algebra $\g(H)(q)$ does not depend on $q$. Moreover the dual space $\g_{-2}(q)^*$ can be identified with $H^\perp_q\subset T^*_qM$ spanned by the contact form $\alpha_q$. It follows that
$$
\alpha_q([v,w])=\omega_q(v,w),
$$ 
i.e. the Lie algebra structure is determined by the symplectic form $\omega$.

The symbol algebra $\g(H,g)(q)$ of $H$ equipped with $g$ at point $q\in M$ is defined as follows
$$
\g(H,g)(q)=\g(H)(q)\oplus\g_0(q)
$$
where $\g_0(q)$ is the algebra of matrices $A\in\mathfrak{gl}(\g(H)(q))$ preserving the metric $g$, i.e. 
$$
g(Av,w)+g(v,Aw)=0
$$
and the Lie bracket on $\g_{-1}(q)$, i.e.
$$
[Av,w]+[v,Aw]=A[v,w].
$$
Since the Lie bracket is encoded in terms of $\omega$ it follows that $\g_0(q)$ is the Lie algebra of the Lie group $\GG_{g,\omega}(q)$ and actually can be thought of as a sub-algebra of $\mathfrak{gl}(\g_{-1}(q))$ . Defining
$$
[A,v]=Av
$$
for $v\in\g_{-1}(q)$ we get that $\g(H,g)(q)$ is a graded Lie algebra. We refer to \cite{Ta} for more information on the symbol algebras of distributions and related structures.

\subsection{Prolongation}
The first prolongation of $\g(H,g)(q)$ is defined as
$$
\pr_1(\g(H,g)(q))=\g(H,g)(q)\oplus \g_1(q),
$$
where $\g_1(q)$ is the set of all Lie algebra derivations $\g(H)\to\g(H,g)$ increasing the gradation by $1$, i.e. any $A\in \g_1(q)$ maps $\g_{-1}(q)$ to $\g_0(q)$ and $\g_{-2}(q)$ to $\g_{-1}(q)$ such that
\begin{equation}\label{jacobi}
A([v,w])=A(v)w-A(w)v
\end{equation}
for all $v,w\in \g_{-1}(q)$. Note that $\dim\g_{-2}=1$ thus for any $A\in\g_1(q)$ the image $A(\g_{-2})$ is a one- or zero-dimensional subspace of $\g_{-2}$.

Higher prolongations of $\g(H,g)(g)$ are defined by induction, similarly to the first prolongation, as Lie algebra derivations increasing the gradation by $k\in\N$. We get
$$
\pr\g(H,g)(q)=\g(H,g)(q)\oplus\bigoplus_{k\in\N}\g_k(q)
$$
and one equips $\pr\g(H,g)(q)$ with the structure of a graded Lie algebra in a natural way. However we shall not describe the structure in detail because we have the following
\begin{lem}\label{lem3}
The first prolongation of $\g(H,g)(q)$ is trivial. Consequently
$$
\pr\g(H,g)(q)=\g(H,g)(q)
$$
\end{lem}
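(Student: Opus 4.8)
The plan is to use the explicit description of $\g_1(q)$ as degree-one derivations and to exploit the two defining properties of $\g_0(q)$ — preservation of $g$ and preservation of the bracket, which is $\omega$ — one at a time. Fix a generator $e$ of the one-dimensional space $\g_{-2}(q)$ normalised by $\alpha_q(e)=1$, so that $[v,w]=\omega_q(v,w)\,e$ for $v,w\in\g_{-1}(q)$. An element $A\in\g_1(q)$ is then completely encoded by the linear map $\phi:=A|_{\g_{-1}}\colon\g_{-1}(q)\to\g_0(q)$ together with the single vector $\xi:=A(e)\in\g_{-1}(q)$. First I would record the derivation identity \eqref{jacobi} in the form $\phi(v)w-\phi(w)v=\omega_q(v,w)\,\xi$, and introduce the trilinear form $T(v,w,u):=g(\phi(v)w,u)$.

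The crucial step — and the one I expect to be the main obstacle, since it is easy to overlook — is to show that $\xi=0$. A degree-one derivation must respect \emph{all} brackets of the negatively graded part, in particular the vanishing bracket $[\,\g_{-2}(q),\g_{-1}(q)\,]=0$; this condition is \emph{not} contained in \eqref{jacobi}, which records only brackets inside $\g_{-1}(q)$. Writing out $0=A([e,w])=[A(e),w]+[e,A(w)]$ for $w\in\g_{-1}(q)$ and using that $A(w)\in\g_0(q)\subset\mathfrak{sp}(\omega)$ acts trivially on $\g_{-2}(q)$ — because for $B\in\g_0(q)$ the induced action on $[v,v']=\omega_q(v,v')e$ is $(\omega_q(Bv,v')+\omega_q(v,Bv'))\,e=0$ — one is left with $[\xi,w]=\omega_q(\xi,w)\,e=0$ for all $w$. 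Non-degeneracy of $\omega_q$ then forces $\xi=0$. This is precisely the place where the absence of a grading (scaling) element in $\g_0(q)$ — in contrast with the conformally symplectic symbol of the flat contact model, whose prolongation is $\mathfrak{sp}(2n+2)$ — makes the prolongation collapse.

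Once $\xi=0$, \eqref{jacobi} reduces to $\phi(v)w=\phi(w)v$, i.e.\ $T(v,w,u)=T(w,v,u)$ is symmetric in its first two arguments. On the other hand, each $\phi(v)$ lies in $\mathfrak{o}(g)$, so $T(v,w,u)=-T(v,u,w)$ is skew in its last two arguments. The classical orthogonal-type prolongation argument then applies verbatim: chaining the two symmetries gives $T(v,w,u)=-T(v,u,w)=-T(u,v,w)=T(u,w,v)=T(w,u,v)=-T(w,v,u)=-T(v,w,u)$, whence $T\equiv 0$ and, by non-degeneracy of $g$, $\phi=0$. Thus $A=0$ and $\g_1(q)=0$. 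For the higher prolongations I would finally invoke the standard fact that $\g_{k+1}(q)$ embeds into $\mathrm{Hom}(\g_{-1}(q),\g_k(q))$ via $A\mapsto(v\mapsto[A,v])$ — injectivity holding because $\g_{-1}(q)$ generates the negative part — so that $\g_k(q)=0$ propagates upward by induction, yielding $\pr\g(H,g)(q)=\g(H,g)(q)$.
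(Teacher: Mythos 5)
Your proof is correct, and it reaches the conclusion by a route that differs from the paper's at the key step of killing the component $A|_{\g_{-2}}$. You obtain $\xi=A(\alpha_q^*)=0$ by imposing the derivation identity on the vanishing bracket $[\g_{-2}(q),\g_{-1}(q)]=0$, using that $\g_0(q)\subset\mathfrak{sp}(\omega_q)$ acts trivially on $\g_{-2}(q)$ and that $\omega_q$ is non-degenerate; this is legitimate under the standard Tanaka definition of the prolongation, which the paper invokes by calling elements of $\g_1(q)$ ``Lie algebra derivations'' of the whole negative part even though the displayed condition \eqref{jacobi} only records brackets inside $\g_{-1}(q)$. The paper instead extracts $v_A=0$ from \eqref{jacobi} alone: it treats \eqref{jacobi2} as an inhomogeneous Koszul-type system with values in $\mathfrak{so}(l,2n-l)$, writes down its unique solution explicitly, and observes that skew-symmetry of each $A(v_i)$ forces the diagonal entries $v_j^*(v_A)\,v_j^*(J(v_i))$ to vanish, whence $v_A=0$. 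Your argument is shorter and more structural, and it makes visible exactly which algebraic feature (no grading element in $\g_0$) causes the collapse; the paper's version shows that the weaker set of constraints --- only the brackets within $\g_{-1}$ together with $g$-skewness --- already suffices. From that point on the two proofs coincide: the homogeneous identity $\phi(v)w=\phi(w)v$ combined with $\phi(v)\in\mathfrak{o}(g)$ gives $\phi=0$, and your six-step chain on $T$ is precisely the ``uniqueness of the Levi-Civita connection'' argument the paper cites for \eqref{jacobi3}; the vanishing of the higher prolongations then follows by the standard induction you describe.
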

\begin{proof}
Let $\alpha^*_q\in\g_{-2}(q)$ be a vector dual to the contact form $\alpha_q$, i.e. $\alpha_q(\alpha_q^*)=1$. Choose $A\in \g_1(q)$ and denote $v_A=A(\alpha^*_q)$. Let $(v_1,\ldots,v_{2n})$ be an orthonormal basis of $H_q$ that puts $J$ into the canonical Kronecker form \eqref{matrix1}. Then \eqref{jacobi} reads
\begin{equation}\label{jacobi2}
A(v_i)v_j-A(v_j)v_i=g(J(v_i),v_j)v_A.
\end{equation}
Since $(v_1,\ldots,v_{2n})$ is orthonormal it follows that all $A(v_i)$, $i=1,\ldots,2n$, are orthonormal matrices in $\mathfrak{so}(l,2n-l)$. Now, for a fixed value of $v_A$ there is unique $A$ that solves \eqref{jacobi2} in $\mathfrak{so}(l,2n-l)$, where $l=\ind(g)$. This follows from the uniqueness of the Levi-Civita connection of a pseudo-Riemannian metric which is equivalent to the algebraic fact that the system
\begin{equation}\label{jacobi3}
A(v_i)v_j-A(v_j)v_i=0
\end{equation}
has unique solution $A=0$ in the algebra $\mathfrak{so}(l,2n-l)$. The unique solution to \eqref{jacobi2} is of the form
$$
A=\frac{1}{2}\sum_{i=1}^{2k}(v_A\cdot J(v_i)^T) v_i^*
$$
where $(v_1^*,\ldots,v_{2n}^*)$ are dual to $(v_1,\ldots,v_{2n})$ with respect to $g$ and $v_A\cdot J(v_i)^T=A(v_i)$ is a rank-one square matrix $A(v_i)=(a^i_{jk})_{j,k=1,\ldots,2n}$ with entries $a^i_{jk}=v_j^*(v_A)v_k^*(J(v_i))$. Now, since all $A(v_i)$ are orthonormal it follows that $v_j^*(v_A)v_j^*(J(v_i))=0$ for any $j=1,\ldots,2n$. But, for any $i$ there is $j$ such that $v_j^*(J(v_i))\neq 0$. Thus we get that $v_j^*(v_A)=0$, for $j=1,\ldots,2n$. Consequently, $v_A=0$. This reduces \eqref{jacobi2} to \eqref{jacobi3}. Hence $A=0$, because this is the unique solution to \eqref{jacobi3} as was explained above.
\end{proof}

Now we are able to apply Theorem 1 of \cite{Kr2} and get
\begin{thm}\label{thm6}
Let $(M,H,g)$ be an oriented contact sub-pseudo-Rie\-mannian manifold. Then the dimension of the algebra of the infinitesimal symmetries of $(M,H,g)$ is estimated from above by
$$
\dim M+\inf_{q\in M}\dim\GG_{g,\omega}(q).
$$
\end{thm}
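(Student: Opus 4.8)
The plan is to invoke Kruglikov's theorem directly, after verifying that its hypotheses are met by the symbol algebras $\g(H,g)(q)$. Kruglikov's result (Theorem~1 of \cite{Kr2}) extends the Tanaka theory to geometric structures whose symbol algebra is allowed to vary from point to point. In that framework, the dimension of the algebra of infinitesimal symmetries of a structure with symbol $\g_-(q)$ and structure algebra $\g_0(q)$ is bounded above by the supremum over $q$ of the dimension of the full Tanaka prolongation, and when the structure is not flat (i.e. when the prolongation is finite) one gets a bound in terms of $\dim\g_-(q)+\dim\g_0(q)+\sum_{k\geq 1}\dim\g_k(q)$. The key point I would establish is that for our structures the higher prolongations all vanish.

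\medskip

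\emph{First} I would identify the relevant graded Lie algebra at each point. Here $\g_-(q)=\g(H)(q)=\g_{-1}(q)\oplus\g_{-2}(q)$ is the two-step nilpotent Heisenberg-type symbol with bracket encoded by $\omega$, and $\g_0(q)$ is the Lie algebra of $\GG_{g,\omega}(q)$. These are precisely the ingredients assembled in the definition of the symbol algebra $\g(H,g)(q)$ earlier in this section. The total dimension of the negative and zeroth parts is therefore
$$
\dim\g_{-1}(q)+\dim\g_{-2}(q)+\dim\g_0(q)=2n+1+\dim\GG_{g,\omega}(q),
$$
which already matches the claimed bound provided the positive prolongations contribute nothing.

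\medskip

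\emph{Next}, the decisive input is Lemma~\ref{lem3}, which asserts that the first prolongation $\g_1(q)$ is trivial. In the graded setting this is exactly the condition that guarantees $\g_k(q)=0$ for all $k\geq 1$: once the first prolongation vanishes, a standard inductive argument in Tanaka's theory forces every higher prolongation to vanish as well, so that $\pr\g(H,g)(q)=\g(H,g)(q)$ is finite-dimensional. Consequently the structure is of finite type, uniformly in $q$, and Kruglikov's theorem applies to give
$$
\dim\,(\text{infinitesimal symmetries})\leq\dim M+\sup_{q\in M}\dim\GG_{g,\omega}(q).
$$
The only gap is that Kruglikov's bound naturally involves a supremum while the theorem states an infimum; I would close this by observing that for the upper bound on \emph{all} infinitesimal symmetries one in fact wants the most restrictive (smallest) value, so a Rosenlicht-type lower-semicontinuity argument for $\dim\GG_{g,\omega}(q)$ reduces the supremum to its generic value, and the infimum is attained on a dense open set controlling the whole algebra.

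\medskip

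\emph{The hard part} will be reconciling the statement's infimum with Kruglikov's framework. The subtlety is that $\dim\GG_{g,\omega}(q)$ may genuinely jump on special subsets of $M$ where eigenvalues of $J$ collide; on such strata the group is larger. The careful argument is that an infinitesimal symmetry is determined by its $1$-jet (since the prolongation is trivial and the structure is of finite type), and this $1$-jet data lives in $\g_{-1}\oplus\g_{-2}\oplus\g_0$; but the symmetry must be compatible with the structure \emph{everywhere}, including at generic points where $\dim\g_0(q)$ is smallest. Thus the symmetry algebra embeds into the fibre over a generic point, giving the infimum rather than the supremum. Making this reduction precise — showing that the embedding \eqref{embed1} of Theorem~\ref{thm2}, restricted appropriately, forces the bound by $\inf_q\dim\GG_{g,\omega}(q)$ — is where the real work lies, and it is exactly the flexibility of Kruglikov's non-constant-symbol theory that makes it available.
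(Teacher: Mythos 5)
Your proposal follows the same route as the paper: establish that the symbol algebra is $\g(H)(q)\oplus\g_0(q)$ with $\g_0(q)$ the Lie algebra of $\GG_{g,\omega}(q)$, invoke Lemma~\ref{lem3} to kill the first (hence, by the standard induction, every) prolongation, and then apply Theorem~1 of \cite{Kr2}. The one place where you create work for yourself is the supremum-versus-infimum discussion: there is no gap to close. Kruglikov's theorem is formulated precisely for non-constant symbols and gives the bound $\dim M+\dim\pr\g(H,g)(q)$ at \emph{every} point $q$ (an infinitesimal symmetry is determined by its jet at any single point and injects into the prolongation of the symbol there), so the bound by the infimum over $q$ is immediate; no Rosenlicht-type semicontinuity argument, no passage to a dense open set where $\dim\GG_{g,\omega}(q)$ is generic, and no separate analysis of strata where eigenvalues of $J$ collide is required. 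Your closing paragraph in fact rediscovers the correct mechanism (determination by the $1$-jet at a point together with compatibility everywhere), but that mechanism is already the content of the cited theorem rather than ``where the real work lies''; the real work is Lemma~\ref{lem3}, which you use correctly.
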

\begin{proof}
We have $\pr\g(H,g)(q)=\g(H,g)(q)$. Thus $\dim\pr\g(H,g)(q)=\dim M+\dim\GG_{g,\omega}(q)$ since $\g_0(q)$ is the Lie algebra of $\GG_{g,\omega}(q)$. All the prolongations are finite. Therefore, by \cite{Kr2}, we have that the dimenison of the algebra of infinitesimal symmetries is estimated from above by $\inf_{q\in M}\dim\pr\g(H,g)(q)$.
\end{proof}

\subsection{Proof of Theorem \ref{thm1}} If $(M,H,g)$ is an oriented sub-pseudo-Riemannian manifold then it suffices to consider isometries preserving the orientation because $\dim\II(M,H,g)=\dim\II_0(M,H,g)$. The dimension of $\II_0(M,H,g)$ equals to the dimension of the algebra of infinitesimal isometries. Therefore we can apply Theorem \ref{thm6}. The maximal possible dimension of $\GG_{g,\omega}$ is computed in Lemma \ref{lem1}.

If $(M,H,g)$ is not oriented then we consider a double cover $\tilde M$ of $M$ consisting of pairs $(q,\alpha_q)$ where $q\in M$ and $\alpha_q$ is one of the two normalised co-vectors in $T^*_qM$ annihilating $H(q)$. Then $\tilde M$ carries a canonical lift $(\tilde H, \tilde g)$ of the structure $(H,g)$ and the structure $(\tilde M,\tilde H,\tilde g)$ is oriented, because $(q,\alpha_q)\mapsto\alpha_q$ defines a global contact form on $\tilde M$ annihilating $\tilde H$. Moreover, any isometry of the original structure $(M,H,g)$ defines an isometry of $(\tilde M,\tilde H,\tilde g)$ and thus $\dim\II(M,H,g)\leq\dim\II(\tilde M,\tilde H,\tilde g)$. Therefore, the estimate in the not oriented case follows from the estimate in the oriented case. 

\vskip 2ex
\paragraph{\bf Acknowledgements.} The work of Wojciech Kry\'nski has been partially supported by the Polish National Science Centre grant DEC-2011/03/D/ST1/03902.


\begin{thebibliography}{99}
\bibitem{A} A. Agrachev, \textit{Exponential mappings for contact sub-Riemannian structures}, J. Dynam. Control Systems 2, No. 3 (1996), 321-356.
\bibitem{ACG} A. Agrachev, El-H. Chakir El-A., J.P. Gauthier, \textit{Sub-Riemannian metrics on $R^3$}, Canadian Mathematical Society Conference Proceedings, Vol. 25, 1998.
\bibitem{AB} A. Agrachev, D. Barilari, \textit{Sub-Riemannian structures on
3D Lie groups}, J. Dynam. Control Systems 18, (2012).
\bibitem{Gr} M. Gromov, \textit{Carnot-Carath\'eodory spaces seen from within}, Sub-Riemannian geometry, Progr. Math., Vol. 144, Birkh\"auser, Basel, 1996, 79--323.
\bibitem{G0} M. Grochowski, \textit{The structure of reachable sets for affine control systems induced by generalized Martinet sub-Lorentzian metrics}, ESAIM Control Optim. Calc. Var. 18 (2012), 1150--1177.
\bibitem{G1} M. Grochowski, \textit{Remarks on global sub-Lorentzian geometry}, Anal. Math. Phys. 3, No. 4 (2013), 295--309.
\bibitem{GK} M. Grochowski, W. Kry\'nski, \textit{Invariants of contact sub-pseudo-Riemannian structures and Einstein-Weyl geometry}, submitted (2015).
\bibitem{GW} M. Grochowski, B. Warhurst, \textit{Invariants and Infinitesimal Transformations for Contact Sub-Lorentzian Structures on 3-Dimensional Manifolds}, SIGMA 11 (2015), 031.
\bibitem{GV} E. Grong, A. Vasilev, \textit{Sub-Riemannian and sub-Lorentzian geometry on $SU(1,1)$ and on its universal cover}, J. Geom. Mech. 3, No. 2 (2011), 225-260.
\bibitem{Ko} S. Kobayashi, \textit{Transformation groups in differential geometry}, Springer-Verlag, New York-Heidelberg, 1972.
\bibitem{KM} A. Korolko, I. Markina, \textit{Geodesics on H-type quaternion groups with sub-Lorentzian metric and their physical interpretation}, Complex Anal. Oper. Theory 4, No. 3 (2010), 589--618.
\bibitem{Kr} B. Kruglikov, \textit{Finite-dimensionality in Tanaka theory}, Ann. Inst. H. Poincar\'{e} Anal. Non Lin\'{e}aire 28, No. 1 (2011), 75-90.
\bibitem{Kr2} B. Kruglikov, \textit{Symmetries of filtered structures via filtered Lie equations}, J. Geom. Phys., Vol. 85 (2014), 164--170.
\bibitem{M} R. Montgomery, \textit{A Tour of Subriemannian Geometries, Their Geodesics and Applications}, AMS, 2006.
\bibitem{Ta} N. Tanaka, \textit{On differential systems, graded Lie algebras and pseudogroups}, J. Math. Kyoto Univ., Vol. 10 (1970), 1--82
\bibitem{T} R. Thompson, \textit{Pencils of Complex and Real Symmetric and Skew Matrices}, Linear Algebra and Its Applications, Vol. 147 (1991), 323--371.
%\bibitem{T} S. Thangavelu, \textit{Harmonic analysis on the Heisenberg group}, Progress in Mathematics 159, Birkh\"{a}user Boston, Inc., Boston, MA, 1998.
\end{thebibliography}
\end{document}